 %&latex
\documentclass{amsart}
\usepackage{verbatim,amsfonts,color,mathrsfs,stmaryrd,graphicx}

%Declaration section

%Option 5
%Theorems, Corollaries, Lemmas, and Propositions, in the plain
%style; all are numbered separately
%There is a Main Theorem in the plain
%style, unnumbered  
%There are Definitions, in the definition style
%There are Notations, in the remark style, unnumbered

\theoremstyle{plain}
\newtheorem{Thm}{Theorem}%[section]

\newtheorem{Lem}[Thm]{Lemma}
\newtheorem{Prop}[Thm]{Proposition}

\theoremstyle{definition}
\newtheorem{Def}[Thm]{Definition}
\newtheorem{Rmk}[Thm]{Remark}
\newtheorem{Eg}[Thm]{Example}

\theoremstyle{remark}

%Command section
\errorcontextlines=0

%redefining \rm to mean: change to roman style

\textwidth=5.75in
\oddsidemargin=0in

\begin{document}

%Topmatter 

%One author
\title{Infinitely many lattice surfaces with special pseudo-Anosov maps} 
%Pseudo-Anosov maps with vanishing Sah-Arnoux-Fathi invariant on infinitely many Veech surfaces}
\author{Kariane Calta}
\address{Vassar College} 
\email{kacalta@vassar.edu }

\author{Thomas A. Schmidt}
\address{Oregon State University\\Corvallis, OR 97331}
\email{toms@math.orst.edu}
\keywords{pseudo-Anosov, SAF invariant, flux, translation surface, Veech group}
\subjclass[2010]{37D99,(30F60,37A25,37A45,37F99, 58F15)}
\date{3 October 2012}

%End topmatter

\begin{abstract}  We give explicit pseudo-Anosov homeomorphisms with vanishing Sah-Arnoux-Fathi invariant.    Any translation surface whose Veech group is commensurable to any of a large class of triangle groups is shown to have an affine pseudo-Anosov homeomorphism of this type.    We also apply a reduction to finite triangle groups and thereby show the existence of non-parabolic elements in the periodic field of certain translation surfaces.  
\end{abstract}

\maketitle

%\tableofcontents

\section{Introduction} 
\subsection{Motivation and History} 

Thurston introduced the notion of  pseudo-Anosov homeomorphism in his proof of the Nielsen-Thurston classification theorem of surface diffeomorphisms, eventually published in \cite{T}.       Also in that work,  Thurston exhibited {\em affine pseudo-Anosov} homeomorphisms;  that is, he gave a flat structure on a surface with appropriate conical singularities,  for which the map in question is in the affine diffeomorphism group:  it is locally affine,  with constant linear part.   An affine pseudo-Anosov's linear part in the flat coordinates is given by a real hyperbolic matrix, and the dilatation is simply the  eigenvalue of largest absolute value for this matrix; the stable directions of the affine pseudo-Anosov map have directions corresponding to the fixed points of its linear part.   The surface with flat structure involved is what is now is called a {\em translation surface}, see below for a definition.  Veech \cite{V} showed that the group of linear parts of the affine diffeomorphisms of a translation surface forms a Fuchsian group;  this group is now often called the {\em Veech group} of the translation surface.   A {\em lattice surface} is a translation surface for which the Veech group is a lattice in $\text{PSL}(2, \mathbb R)$; that is, it is of cofinite volume with respect to Haar measure.   The celebrated Veech dichotomy states in particular that lattice surfaces have what McMullen has dubbed ``optimal dynamics''.   
 
 Any full transversal to the linear flow in a fixed direction a translation surface defines,  by first return, an interval exchange transformation.    The Sah-Arnoux-Fathi (SAF) invariant of the interval exchange transformation gives what McMullen reformulates as his ``flux''; he shows in \cite{Mc} that the vanishing of this flux for an affine pseudo-Anosov homemomorphism implies zero average drift for the leaves of the foliation in its expanding direction; see also \cite{ABB}, \cite{LPV}, \cite{LPV2}.
 
     The first examples of an affine pseudo-Anosov with vanishing SAF invariant was given by Arnoux and Yoccoz \cite{AY}; they used a construction involving suspension of interval exchange transformations.       Arnoux and Schmidt \cite{AS} found special pseudo-Anosov maps of the lattice surfaces given by gluing two copies of the regular $n$-gon together along opposite edges for $n \in \lbrace 7, 9, 14, 18, 20, 24 \rbrace$.  The discovery of these maps was especially surprising considering that the surfaces themselves are among the well-studied first examples of Veech of non-arithmetic lattice surfaces, \cite{V}.   Recently,  McMullen \cite{Mc} has communicated an  example found by Lanneau of a special pseudo-Anosov homeomorphism with vanishing SAF invariant on a genus three surface.

 We give a large class of  pseudo-Anosov homeomorphisms with vanishing SAF invariant.   We do this by finding so-called ``special'' affine pseudo-Anosov homeomorphism.  
 Long and Ried \cite{LR} call a hyperbolic element of a Fuchsian group {\em special} if its eigenvalues lie in the trace field of the group.   Accordingly,  an affine pseudo-Anosov homeomorphism is called special if the eigenvalues of its linear part lie in the trace field of the Veech group of its translation surface.   Calta and Smillie \cite{CS} show that under fairly mild hypotheses, see Lemma~\ref{l:special},  one can normalize a translation surface so that the set of (cotangents of) directions for flows with vanishing Sah-Arnoux-Fathi invariant forms the trace field (and infinity).    With this normalization, each  special  affine pseudo-Anosov homeomorphisms has vanishing Sah-Arnoux-Fathi invariant.   
 
\subsection{Main Result} 
We prove the following result, where $\Delta(m,n,\infty)$ is the usual hyperbolic triangle group;  that  is,  the orientation preserving subgroup of the group generated by reflections in the sides of a hyperbolic triangle having one ideal vertex and the other two having angles $\pi/m$ and $\pi/n$.   (See below for the definition of the invariant trace field of a Fuchsian group.)

%----------------------------------------------------------------------------
\begin{Thm}\label{t:nonobstrSpecPA}  Suppose that both $m,n$ are even,  and there is equality of the trace field and invariant trace field for the triangle group $\Delta(m,n, \infty)$.   Then any translation surface whose Veech group is commensurable to $\Delta(m,n, \infty)$ has special pseudo-Anosov elements in its affine group. 
\end{Thm}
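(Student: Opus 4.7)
\smallskip

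\noindent\textbf{Proof plan.}  The plan is to exhibit a specific hyperbolic element $\gamma \in \Delta = \Delta(m,n,\infty)$ whose squared expanding eigenvalue lies in the trace field $K = \mathbb{Q}(\cos(\pi/m),\cos(\pi/n))$ of $\Delta$, and then pass to a power of $\gamma$ lying in the Veech group $\Gamma$. Because the invariant trace field is a commensurability invariant of Fuchsian groups, the hypothesis $K\Delta = k\Delta$ gives
\[
K \;=\; K\Delta \;=\; k\Delta \;=\; k\Gamma \;\subseteq\; K\Gamma.
\]
Moreover, after replacing the translation surface by its image under an element of $\mathrm{GL}^+(2,\mathbb{R})$---an operation that preserves trace fields---we may assume $\Gamma \cap \Delta$ has finite index in both $\Gamma$ and $\Delta$.

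For the construction, let $S, T$ be the order-$m$ and order-$n$ elliptic generators of $\Delta$. Because $m$ and $n$ are both even, the powers $S^{m/2}$ and $T^{n/2}$ are involutions, i.e.\ half-turns centered at the two finite vertices of a fundamental triangle for $\Delta$. Their product
\[
\gamma \;:=\; S^{m/2}\,T^{n/2}
\]
is then a hyperbolic isometry along the geodesic joining those two centers, with translation length $2d$ equal to twice the hyperbolic distance between them. The hyperbolic law of cosines applied to a triangle with angles $\pi/m,\pi/n$ and one ideal vertex yields
\[
\cosh d \;=\; \frac{1 + \cos(\pi/m)\cos(\pi/n)}{\sin(\pi/m)\sin(\pi/n)},
\]
and setting $a = \cos(\pi/m)$, $b = \cos(\pi/n)$ the identity $(1+ab)^2 - (1-a^2)(1-b^2) = (a+b)^2$ simplifies the expanding eigenvalue to
\[
\lambda \;=\; \cosh d + \sinh d \;=\; \cot(\pi/(2m))\,\cot(\pi/(2n)),
\]
so
\[
\lambda^2 \;=\; \frac{(1+\cos(\pi/m))(1+\cos(\pi/n))}{(1-\cos(\pi/m))(1-\cos(\pi/n))} \;\in\; K.
\]

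To finish, I would pick an even integer $N$ with $\gamma^N \in \Gamma \cap \Delta$; then $\gamma^N \in \Gamma$ is hyperbolic with eigenvalues $(\lambda^2)^{\pm N/2} \in K \subseteq K\Gamma$. Since every hyperbolic element of the Veech group of a lattice surface is the linear part of an affine pseudo-Anosov homeomorphism, $\gamma^N$ supplies the required special pseudo-Anosov. I expect the main obstacle to be identifying the correct candidate $\gamma$ and carrying out the trigonometric simplification: the identity $(1+ab)^2 - (1-a^2)(1-b^2) = (a+b)^2$ is exactly what forces $\lambda^2$---not merely the trace $\lambda^2 + \lambda^{-2}$---to lie in the trace field. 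The parity of $m,n$ is what lets us form the involutions $S^{m/2}, T^{n/2}$, while the equality of trace and invariant trace fields is what promotes the conclusion from $\Delta$ itself to every commensurable Veech group.
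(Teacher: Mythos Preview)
Your argument is correct, and the candidate element $\gamma = S^{m/2}T^{n/2}$ is exactly the one the paper uses (there written $B^{m/2}C^{n/2}$). The route, however, is genuinely different. The paper works in a fixed matrix realization $G_{m,n}\subset\mathrm{SL}_2(\mathcal{O}_K)$, observes that $B^{m/2}$ and $C^{n/2}$ each swap $\pm 1$, so their product fixes $\pm 1$; after conjugating so that $0,1,\infty$ are all parabolic fixed points, the Calta--Smillie theory (periodic field $=$ trace field for surfaces in standard form) then yields specialness via the fixed point lying in $K$. You instead bypass the Calta--Smillie normalization entirely: the hyperbolic law of cosines and the identity $(1+ab)^2-(1-a^2)(1-b^2)=(a+b)^2$ give $\lambda^2\in K$ directly, and the invariance of the invariant trace field under commensurability handles the passage to $\Gamma$.

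Two remarks on what each approach buys. First, the paper's matrix realization actually gives more: since $G_{m,n}\subset\mathrm{SL}_2(\mathcal{O}_K)$ and the fixed points $\pm 1$ lie in $K$, one gets $\lambda\in K$ itself, not merely $\lambda^2$; equivalently, for $m$ even one has $\csc(\pi/m)=U_{m/2-1}(\cos(\pi/m))\in\mathbb{Q}(\cos(\pi/m))$, so your formula $\lambda=(1+a)(1+b)/(\sin(\pi/m)\sin(\pi/n))$ already lies in $K$ and the parity restriction on $N$ is unnecessary. Second, your argument is more self-contained---it needs only elementary hyperbolic trigonometry and Margulis's commensurability invariance---whereas the paper's route is designed to plug directly into the SAF/standard-form framework and to make the vanishing of the SAF invariant (not just specialness) explicit.
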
 
%----------------------------------------------------------------------------

Note that Bouw-M\"oller \cite{BM}, as confirmed by Hooper \cite{H},  have shown that for each signature, translation surfaces satisfying these hypotheses do exist.    Using Hooper's presentation of the Bouw-M\"oller surfaces,  we give fully explicit special affine pseudo-Anosov homeomorphisms,  see Proposition~\ref{p:explicit}, Example~\ref{eg:HooperSurfs} and Figure~\ref{eight4Fig}.\\

We also show that certain other signatures of triangle groups are such that any translation surface whose Veech group is commensurable to a group of the signature must have infinitely many non-periodic directions with vanishing SAF invariant.    Informed by this, we obtain a further new special affine pseudo-Anosov homeomorphism,  see Example~\ref{eg:sevenSeven} and Figure~\ref{f:fatSeven7andFluxMapped}.  This particular example is on a surface whose the trace field is a cubic field,  and thus the flow in the expanding direction of the pseudo-Anosov map has rank three in McMullen's sense; we thus can give a pictorial representation of this flow on a genus 15 surface of the type that McMullen \cite{Mc} gives for each of the cubic example of Arnoux and Yoccoz and the recent cubic example of Lanneau, again see Figure~\ref{f:fatSeven7andFluxMapped}. \\

We note here that we had previously \cite{CaltaSchmidt} built continued fraction algorithms for the Veech groups of the translation surfaces studied by \cite{W} --- these groups are Fuchsian triangle groups, of  signature $(3,n, \infty)$---,  in anticipation of using them to find non-parabolic directions on the surfaces or even special pseudo-Anosov homeomorphisms. The Ward groups have proven to be significantly more resistant to the search for such phenomena than were the original Veech examples, and of course than the translation surfaces with Veech group of signatures that we treat here.  

\subsection{Thanks} Parts of this work arose out of our conversations during the 2011 Oberwolfach workshop ``Billiards, Flat Surfaces, and Dynamics on Moduli Spaces'',   we warmly thank the organizers for that opportunity.  

\bigskip 
\section{Background} 

\subsection{Translation surfaces and Veech groups} 

A translation surface is a  $2$-manifold with finitely many marked points and an atlas whose transition functions are translations. This is equivalent to the definition of a translation surface as a disjoint union of finitely many polygons $P_1, \cdots, P_n$ in $\mathbb R^2$ glued along parallel edges to form a closed surface.  The marked points are cone points, which can arise at vertices of the $P_i$ when  too many polygons are glued around a single vertex, resulting in a total angle at that vertex of $2k\pi$ where $k \ge 1$ is an integer. Equivalently,  a translation surface can be seen as a pair $(M,\omega)$ where $M$ is a Riemann surface and $\omega$ an abelian differential on $M$ --- away from the zeros of the abelian differential,  integration of the abelian differential gives local coordinates with transition functions that are translations.

The group $\text{SL}(2,\mathbb R)$ acts on the moduli space of translation surfaces, preserving genus and the number and order of cone points. In the polygonal model, if $S=P_1, \dots, P_n$  is a surface and $g \in \text{SL}(2,\mathbb R)$ then $gS$  is defined as $gP_1 \cup  \dots  \cup gP_n$. The stabilizer of a surface $S$ under this action is called its {\it Veech group}, which is always a non-cocompact Fuchsian group.  Generically this group is empty but occasionally it is a {\it lattice} subgroup of $\text{SL}(2,\mathbb R)$, which is to say that it is has finite covolume. In this case, we refer to the surface as a {\it lattice surface}.  Veech \cite{V} proved that a pair of regular $n$-gons glued together along parallel sides forms a lattice surface for  $n \geq 4$.    Whether or not a surface is a lattice surface has profound implications for the dynamics of the linear flow on the surface.  Veech \cite{V} proved that in any direction $v$ on a lattice surface, the orbits of the linear flow in that direction are either closed or connect two cone points, or all orbits in that direction are uniformly distributed on the surface.  In the literature, this dynamic dichotomy has come to be called ``optimal dynamics".

\subsection{The Sah-Arnoux-Fathi Invariant}

Suppose that $f$ is an interval exchange transformation (iet) on a finite interval $I$, that is, a piecewise linear orientation preserving isometry of $I$.  Then by definition $f$ exchanges $n$ intervals $I_i$ of lengths $l_i$ for $i=1,...,n$ by translating each $I_i$ by the amount $t_i$.  One can associate to $f$ a certain invariant known as the Sah-Arnoux-Fathi (SAF) invariant that takes values in $\mathbb R \wedge_{\mathbb Q} \mathbb R$ and is defined as $\sum_{i=1}^n l_i \wedge_{\mathbb Q}  t_i$.  The SAF invariant is a central tool in the study of the dynamics of the linear flow on translation surfaces.  Given a direction $v$ on a surface, one can choose an interval transverse to the orbits of the  flow in the direction $v$ that meets every orbit.  The first return map to this interval is an iet.  If the flow in the direction $v$ is periodic then the associated SAF invariant is zero. The converse however is false. The dynamics of the flow in SAF zero directions has been an object of recent interest and in this paper we show that there are surfaces for which the flow in a particular SAF zero direction is the expanding direction of a pseudo-Anosov homeomorphism.  

There is another way to define the SAF invariant of a direction on a translation surface using the $J$ invariant of Kenyon and Smillie \cite{KS}.  The $J$  invariant of a polygon takes values in $\mathbb R^2 \wedge_{\mathbb Q} {\mathbb R^2}$.  If the vertices of a polygon $P$ are $v_0,\dots,v_n$, then $J(P)=\sum_{i=0}^n v_i \wedge v_{i+1}$ where $v_{n+1}=v_0$. Since a translation surface $S$ can be realized as a disjoint union of polygons $P_i$ for $i=1, \dots, n$ glued along parallel sides, we define $J(S)$ to be $\sum_{i=1}^n J(P_i)$. Then the projection $J_v(S)$ of $J(S)$ in the direction $v$ is the SAF invariant of the iet which is the first return map on a full transversal to the linear flow in the direction $v$.  It is not hard to see that the SAF invariant of a periodic iet is zero.  
Thus the SAF invariant in a parabolic direction on a surface is zero.  

Also note that Lemma 2.4 of the  Appendix of Calta's \cite{C} directly implies that the SAF invariant in the form of $J_v(S)$ is constant on $\text{SL}_2(\mathbb R)$-orbits, in the sense that  $J_v(S) = J_{Av}(A\circ S)$ for any $A\in \text{SL}_2(\mathbb R)$.

\subsection{Fields and translation surfaces} 

\indent If a translation surface has at least three directions of vanishing SAF invariant, then Calta and Smillie  \cite{CS} show that the surface can be normalized by way of the $\text{SL}_2(\mathbb R)$-action so that the directions with slope $0$, $1$ and $\infty$ have vanishing SAF invariant and prove that on the normalized surface the set of slopes of directions  with vanishing invariant is a field union with infinity.  A translation surface so normalized is said to be in {\it standard form}, and the field so described is called the {\it periodic direction field}. In this paper, we are primarily interested in directions on a surface that come from the periodic direction field. 

 On the other hand, Kenyon and Smillie \cite{KS} defined the {\it holonomy field} of a translation surface as 
the smallest field over which the set of holonomy vectors is contained in a two dimensional vector space.    A holonomy vector  is associated via the developing map to a closed, nonsingular curve on the surface or to a closed curve that is a union of saddle connections.  
 
Gutkin and Judge define the {\it trace field} of a surface to be the extension of $\mathbb Q$  generated by the traces of the elements of its Veech group.  Since the trace is a conjugacy invariant, the trace field of a given surface is the same for as that of any other surface in its $\text{SL}(2,\mathbb R)$  orbit.   Calta and Smillie \cite{CS} show that if $S$ is a lattice surface, then the holonomy, trace and periodic direction fields are all equal. 
 
\bigskip

 The following is a direct implication of the Calta-Smillie \cite{CS} result that the periodic field of a translation surface in standard form equals its trace field.  
%----------------------------------------------------------------------------
\begin{Lem}\label{l:special}    On a translation surface in standard form, the stable directions of an affine pseudo-Anosov have vanishing Sah-Arnoux-Fathi   invariant  if and only if the pseudo-Anosov is special.
\end{Lem}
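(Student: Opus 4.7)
The plan is to exploit the Calta--Smillie characterization already cited: in standard form, the SAF-vanishing directions are precisely those with slope in the trace field $K$ (together with $\infty$). Thus ``stable direction has vanishing SAF'' translates to ``the slope of the stable direction lies in $K$,'' and I must show this is equivalent to the eigenvalues of the linear part lying in $K$.

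Let $A = \bigl(\begin{smallmatrix} a & b \\ c & d \end{smallmatrix}\bigr) \in \text{SL}(2,\mathbb{R})$ be the linear part of the affine pseudo-Anosov, hyperbolic with trace $t = a+d \in K$. A direct computation gives the eigenvalues $\lambda_{\pm} = \tfrac{1}{2}\bigl(t \pm \sqrt{t^2-4}\bigr)$ and, from solving $cz^2 + (d-a)z - b = 0$ with the identity $(a-d)^2 + 4bc = t^2 - 4$, the fixed slopes $z_{\pm} = \tfrac{1}{2c}\bigl((a-d) \pm \sqrt{t^2-4}\bigr)$ (with the analogous expression if $c = 0$). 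Both quantities therefore live in the quadratic extension $K(\sqrt{t^2-4})$, and each is in $K$ if and only if $\sqrt{t^2-4} \in K$. This forces the two conditions to coincide \emph{provided} the matrix entries $a,b,c,d$ themselves lie in $K$.

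The crucial step is thus to justify that on a surface in standard form, the linear part $A$ has entries in the trace field $K$. For this I would argue from the holonomy: in standard form the holonomy vectors lie in $K^2$, and the directions of slope $0$, $1$, $\infty$ being SAF-vanishing (hence in the periodic direction field) give enough holonomy vectors to span $\mathbb{R}^2$ as a $K$-module in two linearly independent directions. Since $A$ permutes the set of holonomy vectors (these being an affine invariant of the translation surface), writing the image of two spanning holonomy vectors in terms of $K$-coordinates forces $a,b,c,d \in K$. With entries in $K$ established, the equivalence $z_{\pm} \in K \iff \sqrt{t^2-4}\in K \iff \lambda_{\pm} \in K$ finishes the argument.

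The main obstacle is the entry-rationality step; the rest is linear algebra. Once that is in hand, the lemma follows by combining the trace-field computation above with the Calta--Smillie identification of the SAF-vanishing slope set with $K \cup \{\infty\}$.
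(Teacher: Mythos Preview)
Your overall architecture is sound and matches the paper's (very brief) treatment: the paper simply declares the lemma a direct consequence of the Calta--Smillie identification of the periodic direction field with the trace field, and your reduction to showing that the matrix entries $a,b,c,d$ lie in $K$ is exactly the right way to unpack that claim.

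The gap is in your justification of the entry-rationality step. Your sketch asserts that ``in standard form the holonomy vectors lie in $K^2$,'' supported by the idea that the SAF-vanishing directions $0,1,\infty$ ``give enough holonomy vectors.'' But an SAF-zero direction need not contain any holonomy vector at all---indeed, the entire point of the paper is to exhibit SAF-zero directions that are \emph{not} periodic (the stable directions of special pseudo-Anosovs). Even granting holonomy vectors in two such directions, say $(h,0)$ and $(0,k)$, their $K$-span is $hK\times kK$, which equals $K^2$ only if $h/k\in K$; nothing in the definition of standard form forces this. So the holonomy route, as written, does not close.

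There is a direct repair that bypasses holonomy. Any $A$ in the Veech group permutes the set of SAF-zero directions (this is the invariance $J_v(S)=J_{Av}(A\!\cdot\!S)$ from Calta's appendix, together with $A\!\cdot\!S=S$). In standard form that set is $\mathbb{P}^1(K)$, so the M\"obius action of $A$ preserves $\mathbb{P}^1(K)$; a M\"obius transformation carrying three points of $\mathbb{P}^1(K)$ into $\mathbb{P}^1(K)$ is represented by a matrix in $\mathrm{GL}(2,K)$, hence $A=\mu A'$ with $A'\in\mathrm{GL}(2,K)$ and $\mu\in\mathbb{R}^{\times}$. Taking traces, $\mu\,\mathrm{tr}(A')=t\in K$, and $t\neq 0$ since $A$ is hyperbolic, so $\mu\in K$ and $A\in\mathrm{SL}(2,K)$. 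With entries in $K$ established, your eigenvalue/fixed-point computation (both governed by whether $\sqrt{t^2-4}\in K$) goes through unchanged.
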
 
%----------------------------------------------------------------------------

\subsection{  Triangle groups and realizability}

Of central importance to us is the fact that triangle groups are (up to finite index) realized as Veech groups.  

%----------------------------------------------------------------------------
\begin{Thm}\label{t:bouwMH}[Bouw-M\"oller, Hooper]    Every hyperbolic triangle group with parabolic elements is commensurable to a group realized as the Veech group of a translation surface.  
\end{Thm}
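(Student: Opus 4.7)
The plan is to construct, for each hyperbolic triangle signature $(m,n,\infty)$ (allowing $m$ or $n$ to be $\infty$), an explicit translation surface $S_{m,n}$ whose Veech group is commensurable to $\Delta(m,n,\infty)$. These are the Bouw-M\"oller surfaces, presented in particularly convenient combinatorial form by Hooper.

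First I would assemble $S_{m,n}$ from horizontal and vertical cylinders whose incidences are encoded by a bipartite grid graph $G_{m,n}$. Cylinder widths and heights are chosen to be entries of a Perron-type eigenvector for a combinatorial operator on $G_{m,n}$ whose top eigenvalue is expressible in terms of $2\cos(\pi/m)$ and $2\cos(\pi/n)$. The crucial feature of this choice is that all horizontal cylinders acquire commensurable moduli and, simultaneously, so do the vertical ones; this yields two affine Dehn multitwists $T_h$ and $T_v$ with parabolic linear parts fixing the two coordinate directions. Together with the rotational symmetries built into the grid (of orders dictated by $m$ and $n$), they generate a subgroup of the affine group whose image in $\mathrm{PSL}(2,\mathbb{R})$ contains a finite-index subgroup of $\Delta(m,n,\infty)$; the identification is made by direct trace computation, using the spectral identity satisfied by the Perron eigenvector, with the required ratio of off-diagonal entries of $T_h$ and $T_v$ emerging as a product of cosines.

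The main obstacle is showing the Veech group is not strictly larger, but this turns out to be automatic: a Fuchsian group containing a finite-index subgroup of a lattice is itself a lattice in the same commensurability class, so an a priori upper bound on the Veech group is not needed for the commensurability conclusion. The hardest concrete step in the whole argument is the identification of the eigenvalues of the grid operator on $G_{m,n}$ with cosines of rational multiples of $\pi$; I would cite the Bouw-M\"oller analysis, as streamlined by Hooper, rather than reproving it, and for the endpoint cases $m=\infty$ or $n=\infty$ I would verify separately that the degenerate limit of the construction still produces commensurable-moduli cylinder decompositions in both directions.
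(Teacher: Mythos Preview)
The paper does not supply a proof of this theorem at all: it is stated as a background result, attributed to Bouw--M\"oller and Hooper, and the reader is simply referred to \cite{BM} and \cite{H}. So there is no in-paper proof to compare against. Your proposal is in fact a reasonable outline of Hooper's grid-graph/Thurston--Veech realization of the Bouw--M\"oller surfaces, and thus goes well beyond what the present paper does.

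One small correction to your sketch: in the Thurston--Veech construction from the grid graph $G_{m,n}$, the elliptic generators of orders $m$ and $n$ are not produced by independent ``rotational symmetries built into the grid''; rather, the two multitwists $T_h$, $T_v$ already generate (a finite-index subgroup of) the triangle group once their off-diagonal entries satisfy the right product relation, and the elliptics of orders $m$ and $n$ appear as specific words in $T_h$ and $T_v$. The role of the eigenvector identity is precisely to force the traces of those words to be $2\cos(\pi/m)$ and $2\cos(\pi/n)$. The visible rotational symmetries live instead on the semiregular-polygon model $(Y_{m,n},\eta_{m,n})$, to which Hooper passes via an explicit affine equivalence. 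This does not affect the validity of your overall plan, but the mechanism by which the elliptics enter should be stated accurately.
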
 
%----------------------------------------------------------------------------

However, not every full triangle group can be realized as a Veech group.    Hubert and Schmidt \cite{HS} remarked that one can use the fundamental observation of Kenyon and Smillie \cite{KS} that the trace field of a translation surface is generated by the trace of any of its affine pseudo-Anosov homeomorphisms to show that no triangle group of signature $(2, 2n, \infty)$ can be realized as a Veech group.    Hooper \cite{H} uses the observation in the form that a Fuchsian can only be realized as a Veech group if its trace field equals the field generated over the rationals by the traces of the squares of elements of the group;  this latter field is called the {\em invariant trace field} of the group,  as Margulis proved that it is an invariant of the (wide) commensurability class of the group, see \cite{MR} for a discussion.

\section{Special affine pseudo-Anosov homeomorphisms}   In this section, we focus on the arithmetic of triangle groups in order to find the linear parts of special affine pseudo-Anosov homeomorphisms.   To do this,  Lemma~\ref{l:specHypMatrix} is key.     It allows us to prove a more precisely worded version of our main theorem, Theorem~\ref{t:mainReworded},   thus giving fully explicit pseudo-Anosov maps, as shown by Example~\ref{eg:HooperSurfs}  and Figure~\ref{eight4Fig}.     In the final subsection,  we give some results about the groups for which the full triangle group is never a Veech group.
\bigskip 
 
 \subsection{A special hyperbolic matrix}
Our approach here is centered on properties of groups realized as Veech groups of translation surfaces.   Since any parabolic direction on a translation surface has flow of vanishing Sah-Arnoux-Fathi invariant we use the following variation of a term used by Calta and Smillie \cite{CS} .

%--------------------------------------------------------------------------------
\begin{Def}    A Fuchsian group  is  in (group) {\em parabolic standard form} if its set of parabolic fixed points (for its action on the Poincar\'e upper half-plane)  includes $0$, $1$ and $\infty$.   
\end{Def}
%--------------------------------------------------------------------------------

Note that since the Sah-Arnoux-Fathi invariant of a periodic flow is zero,  whenever the Veech group of a translation surface is in parabolic standard form,  the surface itself is in the standard form defined by Calta and Smillie.
\bigskip

To expedite discussion,  we take a specific representation for each of the triangle groups we consider. Let  $G_{m,n}$ generated by  

\begin{equation}\label{e:generators} 
A = \begin{pmatrix} 1& 2 \cos \pi/m + 2 \cos\pi/n\\
                                      0&1\end{pmatrix},\, B = \begin{pmatrix} 2 \cos \pi/m& 1\\
                                      -1&0\end{pmatrix},\,   C = \begin{pmatrix} -2 \cos \pi/n& 1\\
                                      -1&0\end{pmatrix}\,,
\end{equation}
and note that $C= AB$.  The group is easily verified to be a Fuchsian triangle group of signature $(m,n,\infty)$.  The trace field of $G_{m,n}$ is $K_{m,n} = \mathbb Q(\cos \pi/m, \cos \pi/n)$, see   p. 159 of  \cite{MR}. 
 \bigskip 

%----------------------------------------------------------------------------
\begin{Lem}\label{l:specHypMatrix}  Suppose that $m,n$ are distinct and even. If $\alpha$ is a nonzero finite parabolic fixed point of $G_{m,n}$, then multiplication by $\alpha^{-1}$ defines a transformation that conjugates $G_{m,n}$ to a group in parabolic standard form, with special hyperbolic elements.  % Furthermore,  if $\alpha$ is a unit,  then this conjugate group is also integrally normalized.
\end{Lem}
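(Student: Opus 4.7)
The proof has two components. Since eigenvalues and hyperbolicity are conjugation-invariant, it suffices to establish (i) that the M\"obius transformation $z \mapsto z/\alpha$ places $G_{m,n}$ in parabolic standard form, and (ii) that $G_{m,n}$ itself contains a hyperbolic element with eigenvalues in the trace field $K_{m,n}$.

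For (i), I will show that $0$ is automatically a parabolic fixed point of $G_{m,n}$. The generator $B$ has inverse $B^{-1} = \mat{0}{-1}{1}{2\cos\pi/m}$, which visibly sends $\infty$ to $0$; since $\infty$ is the parabolic fixed point of $A$, its image under $B^{-1}$ is fixed by the conjugate parabolic $B^{-1}AB$ and hence is itself a parabolic fixed point. The transformation $z \mapsto z/\alpha$ sends $\{0,\alpha,\infty\}$ bijectively to $\{0,1,\infty\}$, giving the required parabolic standard form for the conjugate group.

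For (ii), the parity hypothesis lets me use $B^{m/2}$ and $C^{n/2}$. Both are involutions in $\text{PSL}(2,\mathbb R)$: since $B$ has eigenvalues $e^{\pm i\pi/m}$, the element $B^{m/2}$ has eigenvalues $\pm i$, hence trace zero in $\text{SL}(2,\mathbb R)$. A routine induction based on $B^{k+1} = B \cdot B^{k}$ yields closed forms
\[
B^{m/2} = \mat{\beta_1}{\gamma_1}{-\gamma_1}{-\beta_1}, \qquad C^{n/2} = \mat{\beta_2}{\gamma_2}{-\gamma_2}{-\beta_2},
\]
with $\beta_i, \gamma_i$ given by Chebyshev-type polynomials in $2\cos\pi/m$ or $2\cos\pi/n$, hence in $K_{m,n}$. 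The antidiagonal symmetry forces $B^{m/2}C^{n/2}$ to take the persymmetric form $\mat{p}{q}{q}{p}$, whose eigenvalues factor as $p \pm q = (\beta_1 \mp \gamma_1)(\beta_2 \pm \gamma_2) \in K_{m,n}$.

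It remains to check that $B^{m/2}C^{n/2}$ is genuinely hyperbolic, not elliptic or parabolic. These two involutions are half-turns about the respective fixed points $e^{i(\pi-\pi/m)}$ and $e^{i\pi/n}$ of $B$ and $C$, which are distinct whenever $1/m + 1/n \neq 1$ --- automatic when $m \neq n$ are both even. The product of two half-turns about distinct points of $\mathbb H$ is always hyperbolic, which finishes the construction. I expect the main technical hurdle to be the derivation of the closed forms for $B^{m/2}$ and $C^{n/2}$; the persymmetric shape of their product, which makes the eigenvalues factor so cleanly into $K_{m,n}$, is not transparent without explicit inductive control of the matrix entries.
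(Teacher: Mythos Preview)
Your proof is correct and follows essentially the same route as the paper: both arguments identify $B^{m/2}C^{n/2}$ as the desired hyperbolic element via the closed forms for powers of $B$ and $C$, with the paper phrasing the key computation as ``$B^{m/2}$ and $C^{n/2}$ each swap $\pm 1$, so their product fixes $\pm 1$'' and you phrasing it equivalently as ``the product is persymmetric, so its eigenvalues $p\pm q$ lie in $K_{m,n}$.'' One cosmetic remark: because $C$ carries the opposite sign on its $(1,1)$ entry, the induction actually gives $C^{n/2}$ the pattern $\mat{\beta_2}{-\gamma_2}{\gamma_2}{-\beta_2}$ up to an overall sign, but the product is still persymmetric and your conclusion is unaffected.
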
 
%----------------------------------------------------------------------------

\begin{proof} Being generated by $A$ and  $B$, the group $G_{m,n}$ clearly has  infinity as a parabolic  fixed point.  Since $B$ sends $0$ to infinity, $0$ is also a parabolic  fixed point.  

Recall that the product of any two distinct elliptic elements of order two  in $\text{PSL}(2, \mathbb R)$ is a hyperbolic element %(see Beardon,  xxxcheck Katokxxx).     
Here, we certainly have that $ B^{m/2}, C^{n/2}$ are elliptic of order two, their product is thus hyperbolic.    Now,   for each integer $k$, 
\begin{equation}\label{e:powersOfBandC} B^k = \begin{pmatrix} b_{k+1}  & b_k\\
                                      -b_k&-b_{k-1}\end{pmatrix}\,, \;\; C^k = (-1)^k\begin{pmatrix} c_{k+1}  & -c_k\\
                                      c_k&-c_{k-1}\end{pmatrix}\,
\end{equation}
where $b_k = \sin \frac{k \pi}{m}/\sin \frac{\pi}{m}$, and $c_k = \sin \frac{k \pi}{n}/\sin \frac{\pi}{n}$, see say \cite{BKS}; thus,  one finds that
\[ B^{m/2}\cdot (\pm 1) = \mp 1, \;\; \; C^{n/2}\cdot (\pm 1) = \mp 1\,.\]
Thus,  their product $B^{m/2} C^{n/2}$ fixes both $-1, 1$. 

  Since $G_{m,n} \subset \text{SL}_2(\mathcal O_K)$,  all finite parabolic fixed points of $G_{m,n}$ lie in the field $K_{m,n}$.  Let $\alpha$  be as in our hypotheses. By the triple transitivity of $\text{SL}_2(\mathbb R)$ acting on the real projective line,   there is an element $M$ that sends $\alpha$ to $1$ while fixing each of zero and infinity.   But, elementary considerations of $2 \times 2$-matrices show that the action of $M$ is simply multiplication by $\alpha^{-1}$.     The conjugation of $G_{m,n}$ by $M$ is clearly in parabolic standard form and   the hyperbolic fixed point $1$ of $G_{m,n}$ corresponds to the point $\alpha^{-1}$ --- this is an element of $K$ fixed by some hyperbolic element of the conjugate group.   That is, the conjugate group has special hyperbolic elements.   % Finally, if $\alpha$ is a unit,  then multiplication by $\alpha^{-1}$ also preserves $\mathcal O_K$; that the conjugate is integrally normalized is easily verified (by regarding the generators).
\end{proof} 

\subsection{Special affine pseudo-Anosov homeomorphisms}

Recall that W.~P.~Hooper determined conditions such that the  triangle group $\Delta(m,n, \infty)$ has its trace field  equal to its invariant trace field;  the conditions are given  in terms of the indices $m$, $n$ and their greatest common divisor.   Inequality of the two fields is an obstruction to the group being a Veech group of any translation surface.  
 
%--------------------------------------------------------------------------------
\begin{Def}   Given a pair of integers $m,n$, let $\gamma = \gcd(m,n)$.   We say that the pair $m, n$  is {\em unobstructed} if neither of the following conditions hold: 
\begin{enumerate} 
\item $\gamma = 2$; 
\item $m/\gamma$ and $n/\gamma$ are both odd.
\end{enumerate} 
\end{Def}
%--------------------------------------------------------------------------------

%----------------------------------------------------------------------------
%\begin{Thm}\label{t:patResult}[Hooper]     Let $m,n$ be even integers and let $\gamma = \gcd(m,n)$.  Under either of the obstruction conditions,   there is no translation surface whose Veech group is the full triangle group of signature  $(m,n,\infty)$.\end{Thm} 
%----------------------------------------------------------------------------

Our main result can be more precisely stated as follows.

%----------------------------------------------------------------------------
\begin{Thm}\label{t:mainReworded}      Let $m,n$ be an unobstructed pair of even integers,  and suppose that $\mathcal S$ is a  translation surface whose Veech group is commensurable to $G_{m,n}$.   Then    some power of $B^{m/2}C^{n/2}$ is the derivative of a special affine pseudo-Anosov automorphism of $\mathcal S$.
 \end{Thm}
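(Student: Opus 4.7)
The plan is to combine Lemma~\ref{l:specHypMatrix} with the normalizing freedom provided by the $\text{SL}_2(\mathbb{R})$-action on $\mathcal S$, and then to verify specialness by an arithmetic comparison of trace fields. First I would renormalize the surface: since the Veech group $V$ of $\mathcal S$ is (widely) commensurable to $G_{m,n}$, there exists $g \in \text{SL}_2(\mathbb{R})$ so that $gVg^{-1}\cap G_{m,n}$ has finite index in both; after replacing $\mathcal S$ by $g\mathcal S$, I may assume that $V\cap G_{m,n}$ itself has finite index in each of $V$ and $G_{m,n}$. Next, fix a nonzero finite parabolic fixed point $\alpha$ of $G_{m,n}$ and let $M$ be multiplication by $\alpha^{-1}$. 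By Lemma~\ref{l:specHypMatrix}, the conjugate $G' := M G_{m,n} M^{-1}$ is in parabolic standard form, and $P := M B^{m/2} C^{n/2} M^{-1} \in G'$ is hyperbolic with fixed points $\pm \alpha^{-1} \in K_{m,n}$. Applying $M$ to $\mathcal S$ as well, the new Veech group $V^* = M V M^{-1}$ meets $G'$ in a finite-index subgroup, and since $0, 1, \infty$ are parabolic fixed points of the Veech group of the renormalized surface, that surface is in the Calta--Smillie standard form.

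From here, since $V^* \cap G'$ has finite index in $G'$ and $P \in G'$, some positive power $P^k$ lies in $V^*$, and is therefore the derivative of an affine diffeomorphism $\varphi$ of the renormalized $\mathcal S$. As $P^k$ is hyperbolic, $\varphi$ is an affine pseudo-Anosov.

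It remains to check that $\varphi$ is special. Margulis's theorem gives that the invariant trace field is a wide commensurability invariant, so $V^*$ and $G'$ share the same invariant trace field. Under the unobstructed hypothesis, Hooper's analysis shows that the trace and invariant trace fields of $G_{m,n}$, and hence of $G'$, coincide---both equal to $K_{m,n}$. Because $V^*$ is a Veech group, the Kenyon--Smillie observation forces its trace field to equal its invariant trace field as well, so the trace field of $V^*$ is $K_{m,n}$. Finally, since the fixed points $\pm \alpha^{-1}$ of $P$ lie in $K_{m,n}$ and a matrix model for $P$ has entries in $K_{m,n}$, the eigenvalues of $P$, and hence of $P^k$, also lie in $K_{m,n}$. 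Thus $\varphi$ is special, and Lemma~\ref{l:special} concludes that the stable directions of $\varphi$ carry vanishing SAF invariant.

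I expect the principal difficulty to be bookkeeping across the two successive $\text{SL}_2(\mathbb{R})$-normalizations---the implicit one coming from wide commensurability and the explicit one given by $M$---and matching $P^k \in V^*$ with ``some power of $B^{m/2} C^{n/2}$'' as phrased in the theorem. The arithmetic step that the eigenvalues of $P$ land in $K_{m,n}$ once its fixed points do is a routine computation using $\det P = 1$.
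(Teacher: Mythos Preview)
Your proposal is correct and follows essentially the same route as the paper: normalize via $\text{SL}_2(\mathbb R)$, apply Lemma~\ref{l:specHypMatrix} to conjugate $G_{m,n}$ into parabolic standard form with $B^{m/2}C^{n/2}$ becoming a special hyperbolic, pass to a power to land in the Veech group, and read off specialness on the standard-form surface.

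The one genuine difference is that you make explicit the step the paper leaves tacit: you invoke Margulis (invariant trace field is a commensurability invariant), Hooper (unobstructed $\Leftrightarrow$ trace field $=$ invariant trace field for $G_{m,n}$), and Kenyon--Smillie (a Veech group has trace field $=$ invariant trace field) to pin down the trace field of $V^*$ as exactly $K_{m,n}$, so that the fixed points $\pm\alpha^{-1}\in K_{m,n}$ really do lie in the periodic field. This is precisely where the unobstructed hypothesis is used, and your argument makes that transparent; the paper's proof asserts ``this element of the Veech group fixes points in the periodic field'' without spelling out why the periodic field is all of $K_{m,n}$. Conversely, the paper closes by transporting the vanishing-SAF conclusion back to the original $\mathcal S$ via Lemma~2.4 of \cite{C}, whereas you (correctly) observe that specialness itself is $\text{SL}_2(\mathbb R)$-conjugation invariant, so no separate transport argument is needed. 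Your flagged bookkeeping concern---that on the \emph{original} $\mathcal S$ the derivative is a conjugate of a power of $B^{m/2}C^{n/2}$ rather than literally a power---is accurate and applies equally to the paper's statement; it is a harmless informality in the phrasing of the theorem.
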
 
%----------------------------------------------------------------------------
\begin{proof}   Suppose that $\mathcal S$ is a translation surface whose Veech group is commensurable to $G_{m,n}$.   Using the $\text{SL}_2(\mathbb R)$-action,  we may assume that $\mathcal S$  has as its Veech group a finite index subgroup of $G_{m,n}$.        Lemma~\ref{l:specHypMatrix} now provides an element of $\text{SL}_2(\mathbb R)$ conjugating $G_{m,n}$ into parabolic standard form while conjugating $B^{m/2}C^{n/2}$ to  a special hyperbolic matrix.  

The set of parabolic fixed points is unaltered by passage to a  finite index subgroup,   thus the image of $\mathcal S$ by this conjugating element is  in standard form.    By the work of Calta and Smillie \cite{CS} the set of (non-vertical) directions for which the flow has vanishing Sah-Arnoux-Fathi invariant forms a field, the periodic field,  and since the translation surface certainly has some affine pseudo-Anosov automorphism,  this equals the trace field of the Veech group.

    Now,  there is some nonzero power of our (special) hyperbolic element of the larger group that belongs to the finite index subgroup.    Since the two fixed points are common to the cyclic subgroup generated by a hyperbolic element, this element of the Veech group fixes points in the periodic field.   From this,  the corresponding pseudo-Anosov map has vanishing Sah-Arnoux-Fathi invariant.     Finally,  by  Lemma~2.4 of the Appendix in \cite{C},   any $M \in \text{SL}_2(\mathbb R)$ sends this direction to a direction on the $M$-image of this surface that also has vanishing  Sah-Arnoux-Fathi invariant.   In particular,  we can return in this way to the original direction and surface we began with;  the result thus holds.  
\end{proof} 

\bigskip 
Our construction easily leads to explicit examples.   
 Hooper \cite{H} explicitly realizes the Bouw-M\"oller translation surfaces in two different ways.   First, by way of grid graphs presenting the combinatorics of the
intersections of horizontal and vertical cylinders so as to apply
the Thurston construction \cite{T}; the resulting surfaces Hooper denotes   $(X_{m,n}, \omega_{m,n})$.   Second,   as translation surfaces  he denotes as $(Y_{m,n}, \eta_{m,n})$,  formed by appropriately identifying sides of semi-regular polygons.   In the case where both $m$ and $n$ are even,  there is a natural involution on the surface.   Hooper denotes the resulting respective quotients  as $(X_{m,n}^{e}, \omega_{m,n}^{e})$, $(Y_{m,n}^{e}, \eta_{m,n}^{e})$.   
Hooper shows  that the Veech group of $(X_{m,n}, \omega_{m,n})$ is an index two subgroup of $G_{m,n}$ and that the transformation $z \mapsto D_{\mu}(z) =  (\csc \pi/n) z - \cot \pi/n$ conjugates this to the Veech group of    $(Y_{m,n}, \eta_{m,n})$.   

%----------------------------------------------------------------------------
\begin{Prop}\label{p:explicit}     Suppose $m,n$ is an obstructed pair of even integers.   Then on each of  Hooper's translation surfaces
 $(Y_{m,n}, \eta_{m,n})$ and $(Y_{m,n}^{e}, \eta_{m,n}^{e})$,  the flow in the direction  $(1- \cos \pi/n)/(\sin \pi/n)$ is  a stable direction of a special affine Anosov homeomorphism.     Furthermore,  letting 
\[ \lambda = \dfrac{\cos \frac{\pi}{m} \cos \frac{\pi}{n} + \cos \frac{\pi}{m}  + \cos \frac{\pi}{n}  + 1}{\sin \frac{\pi}{m} \sin \frac{\pi}{n} }\,, 
\] 
the dilatation of this Anosov homeomorphism is $\lambda$ if four divides $\gcd(m,n)$ and $\lambda^2$ otherwise.  
\end{Prop}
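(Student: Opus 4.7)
The plan is to apply Theorem~\ref{t:mainReworded} to Hooper's surface $(X_{m,n}, \omega_{m,n})$, whose Veech group $H$ is an index-two subgroup of $G_{m,n}$, and then to transport the resulting special pseudo-Anosov to $(Y_{m,n}, \eta_{m,n})$ by the linear conjugation $D_{\mu}$. For the quotient $(Y^e_{m,n}, \eta^e_{m,n})$, the Hooper involution centralizes the affine group, so any commuting pseudo-Anosov on $Y_{m,n}$ descends to the quotient without changing its dilatation, handling both surfaces at once.

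The stable direction is read off at once from Lemma~\ref{l:specHypMatrix}: the fixed points of $B^{m/2}C^{n/2}$ on $\partial \mathbb{H}$ are $\pm 1$, and the M\"obius action of $D_\mu$ sends $1$ to $(1 - \cos\pi/n)/\sin(\pi/n)$, matching the direction in the statement. Theorem~\ref{t:mainReworded} then guarantees that some positive power of $B^{m/2}C^{n/2}$ is the derivative of a special affine pseudo-Anosov fixing this direction.

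To compute the dilatation, I would first evaluate the trace of $B^{m/2}C^{n/2}$. Inserting $b_{m/2} = \csc(\pi/m)$, $b_{m/2\pm 1} = \cot(\pi/m)$ and the analogous identities for the $c_j$ into \eqref{e:powersOfBandC}, the product simplifies to a symmetric matrix with
\[
\mathrm{tr}\bigl(B^{m/2}C^{n/2}\bigr) \;=\; (-1)^{n/2}\cdot \frac{2\bigl(1 + \cos(\pi/m)\cos(\pi/n)\bigr)}{\sin(\pi/m)\sin(\pi/n)}.
\]
The half-angle identity $1 + \cos\theta = 2\cos^2(\theta/2)$ recasts the stated $\lambda$ in the compact form $\lambda = \cot(\pi/2m)\cot(\pi/2n)$, and a direct check verifies that $\lambda + \lambda^{-1}$ equals the absolute value of the displayed trace. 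Consequently, the eigenvalues of $B^{m/2}C^{n/2}$ are precisely $(-1)^{n/2}\lambda^{\pm 1}$.

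The final step, and the main obstacle, is to identify the smallest $k > 0$ for which $(B^{m/2}C^{n/2})^k$ is the derivative of an orientation-preserving affine Anosov on $(Y_{m,n},\eta_{m,n})$. Two independent parity conditions enter: the eigenvalues are positive---so that the invariant foliations are preserved with their transverse orientations---only when $4 \mid n$, and membership of $B^{m/2}C^{n/2}$ in the index-two subgroup $H$ is governed by a $\mathbb{Z}/2$-homomorphism whose explicit form, read off from Hooper's construction, turns the membership condition into $4 \mid m$. Their conjunction is precisely $4 \mid \gcd(m,n)$, in which case $k = 1$ and the dilatation is $\lambda$; otherwise the smallest power lying in $H$ with positive eigenvalues is $k = 2$, giving dilatation $\lambda^2$. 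The delicate point is extracting the correct defining homomorphism of $H$ from Hooper's combinatorial description and matching it with the sign obstruction so that the final answer reduces to the $\gcd$ condition in the statement.
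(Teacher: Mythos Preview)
Your identification of the stable direction via $D_\mu(1)$ and your computation of the trace and eigenvalues of $B^{m/2}C^{n/2}$ are correct and agree with the paper. The problem is in the last step, where you split the condition for $k=1$ into a sign obstruction ($4\mid n$) and a membership obstruction ($4\mid m$). The sign obstruction is spurious: the dilatation of a pseudo-Anosov is the \emph{absolute value} of the dominant eigenvalue of its linear part, so whether $\mathrm{tr}\bigl(B^{m/2}C^{n/2}\bigr)$ is $\pm(\lambda+\lambda^{-1})$ has no bearing on whether the dilatation is $\lambda$ or $\lambda^{2}$. That leaves only the membership obstruction, and your asymmetric claim that membership in $H$ reduces to $4\mid m$ alone cannot be correct, since the description of $H$ you invoke (containing $B^{2},C^{2}$ but neither $B$ nor $C$) is symmetric in $B$ and $C$.

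The paper's argument is shorter. From Hooper one has $B^{2},C^{2}\in H$ and $B,C\notin H$; since $[G_{m,n}:H]=2$, this forces $H$ to be the kernel of the surjection $G_{m,n}\to\mathbb Z/2\mathbb Z$ sending both $B$ and $C$ to the nontrivial class, i.e.\ the subgroup of even total word length in $B,C$. Hence $B^{m/2}C^{n/2}\in H$ if and only if $m/2+n/2$ is even. Now the case $m\equiv n\equiv 2\pmod 4$ (even sum but $4\nmid\gcd(m,n)$) is always \emph{obstructed}: writing $m=2m'$, $n=2n'$ with $m',n'$ odd and $\gamma=2\gcd(m',n')$, one gets $m/\gamma$ and $n/\gamma$ both odd. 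Thus, under the intended \emph{un}obstructed hypothesis, the parity condition $m/2+n/2$ even is equivalent to $4\mid\gcd(m,n)$, and the dilatation is $\lambda$ in that case and $\lambda^{2}$ otherwise---with no appeal to the sign of the eigenvalues.
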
 
%----------------------------------------------------------------------------
\begin{proof}   Since any power of of $B^{m/2}C^{n/2}$ fixes the direction $z=1$,  we find that the direction   $D_{\mu}(1) = (1- \cos \pi/n)/(\sin \pi/n)$ determines a  flow on each of  $(Y_{m,n}, \eta_{m,n})$ and  $(Y_{m,n}^{e}, \eta_{m,n}^{e})$ with vanishing Sah-Arnoux-Fathi invariant.     Hooper shows that  $B^2, C^2$ (in our notation) are in the Veech group of $(X_{m,n}, \omega_{m,n})$, whereas neither $B$ nor $C$ is.  It follows that $B^{m/2}C^{n/2}$ itself is in this group if and only if four divides $\gcd(m,n)$.   Otherwise it is the square of this special hyperbolic that belongs to the group.

    The dilatation of an affine  pseudo-Anosov automorphism is the larger of the two eigenvalues of its linear part, and here this is the same as that of $B^{m/2}C^{n/2}$ or of its square.  Thus, the dilatation is  as claimed.
\end{proof}

\begin{Eg}\label{eg:HooperSurfs}   
  In Figure~\ref{eight4Fig},  we show the result when $(m,n) = (8,4)$;  the translation surface  $(Y_{8,4}^{e}, \eta_{8,4}^{e})$ is a suspension surface over an interval exchange transformation on eleven intervals with permutation in the usual redundant notation 
   \setcounter{MaxMatrixCols}{20}
  \[  
   \begin{pmatrix} 1&2&3& 4&5&6&7&8&9&10&11\\
                              7&5&2&10&3&1&11&8 &4&6  &9\end{pmatrix}\,.
  \]    The dilatation of this pseudo-Anosov map is the quartic number  $3 + 2 \sqrt{2} + \sqrt{20 + 14 \sqrt{2}}$.

%---------------------------------Figure 1----------------------------------------------
\begin{figure}[h]
%\scalebox{.9}{{
\includegraphics{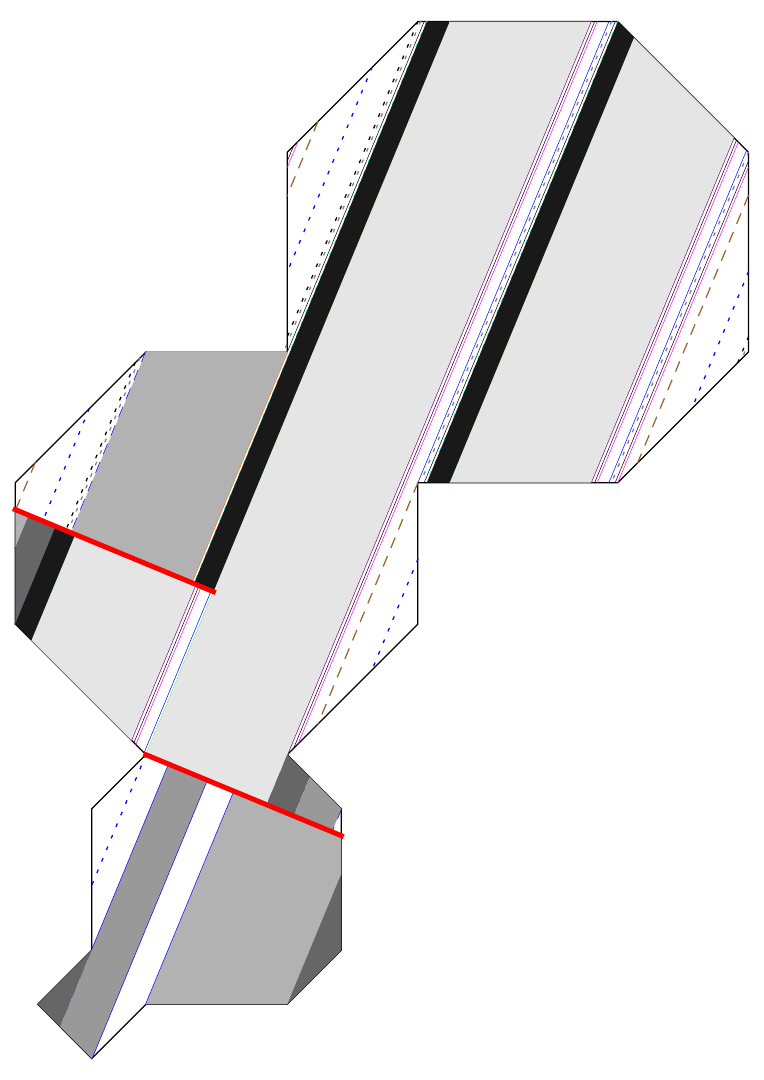}
%}}
\caption{A pseudo-Anosov map with vanishing Sah-Arnoux-Fathi invariant, indicated as zippered rectangles on Hooper's translation surface $(Y_{8,4}^{e}, \eta_{8,4}^{e})$.  Thick (red) intervals comprise the transversal to the flow; rectangles 1,2,3,9,11 are shaded.}
\label{eight4Fig}
\end{figure}
%-------------------------------------------------------------------------------------------
\end{Eg}

\subsection{Remarks on the ``obstructed'' setting} 

We now show that Lemma~\ref{l:specHypMatrix} cannot provide information about the existence of special pseudo-Anosov maps when the group $G_{m,n}$ has its trace field unequal to its invariant trace field.  
%----------------------------------------------------------------------------
\begin{Prop}\label{p:parabFxPtsNotInInvarTraceField}     If $m, n$ are such that the trace field of $G_{m,n}$ does not equal its invariant trace field,  then no nonzero parabolic fixed point of $G_{m,n}$ lies in its invariant trace field.
\end{Prop}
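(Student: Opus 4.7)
My plan is to prove the contrapositive: assuming that some nonzero parabolic fixed point $\alpha$ of $G_{m,n}$ lies in the invariant trace field $k$, I will deduce $K = k$.

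The main tool is a standard consequence of Maclachlan-Reid theory (see \cite{MR}): because $G_{m,n}$ contains parabolic elements, its invariant quaternion algebra splits as $M_2(k)$, and therefore the subgroup $G_{m,n}^{(2)} = \langle \gamma^2 : \gamma \in G_{m,n}\rangle$ is conjugate inside $PGL_2(\mathbb{R})$ to a subgroup of $PSL_2(k)$. Choose $g \in PGL_2(\mathbb{R})$ with $g G_{m,n}^{(2)} g^{-1} \subseteq PSL_2(k)$. The parabolic fixed point of any element of $SL_2(k)$ lies in $k \cup \{\infty\}$, and the set of parabolic fixed points of $G_{m,n}^{(2)}$ coincides with that of $G_{m,n}$ (since $\gamma$ parabolic implies $\gamma^2$ parabolic with the same fixed point); consequently $g$ carries every parabolic fixed point of $G_{m,n}$ into $k \cup \{\infty\}$.

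Next I normalize $g$. Both $0$ and $\infty$ are parabolic fixed points of $G_{m,n}$, so $g(0),\,g(\infty) \in k \cup \{\infty\}$; post-composing $g$ with an appropriate element of $PSL_2(k)$, I may arrange that $g$ fixes both $0$ and $\infty$. Then $g$ is represented by the M\"obius transformation $z \mapsto \beta z$ for some $\beta \in \mathbb{R}^{*}$. Applying $g$ to the hypothesized parabolic fixed point $\alpha$ gives $\beta\alpha = g(\alpha) \in k$, and since $\alpha \in k \setminus \{0\}$ I conclude $\beta \in k$.

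The conclusion then follows from a direct calculation. Because $B^2 \in G_{m,n}^{(2)}$, the conjugate
\[
g B^2 g^{-1} \;=\; \begin{pmatrix} 4\cos^2(\pi/m) - 1 & 2\cos(\pi/m)\,\beta \\ -2\cos(\pi/m)/\beta & -1 \end{pmatrix}
\]
lies in $SL_2(k)$. The $(1,2)$ entry forces $2\cos(\pi/m)\,\beta \in k$, and combined with $\beta \in k^{*}$ this gives $\cos(\pi/m) \in k$. The analogous computation with $C^2$ in place of $B^2$ yields $\cos(\pi/n) \in k$. Hence $K = \mathbb{Q}(\cos(\pi/m),\cos(\pi/n)) \subseteq k$, so $K = k$, contradicting the hypothesis $K \neq k$. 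The main obstacle will be the careful invocation of the Maclachlan-Reid splitting result and the normalization of the conjugator $g$ to scalar form; once that reduction is in place, the remainder of the argument is a short computation.
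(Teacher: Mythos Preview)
Your argument is correct, and it takes a genuinely different route from the paper's.  The paper proceeds by direct, elementary computation: it writes $K_{m,n}=k_{m,n}(\sqrt{\delta})$ with $\sqrt{\delta}=2\cos(\pi/n)$, re-expresses the generators $A$ and $C$ so that each matrix entry lies either in $k_{m,n}$ or in $\sqrt{\delta}\,k_{m,n}$, and then defines an \emph{even/odd} parity on $G_{m,n}$ according to the pattern of $\sqrt{\delta}$ in the entries.  A short induction shows every element is even or odd, so the image of $\infty$ under any group element has the form $a/(c\sqrt{\delta})$ or $(a\sqrt{\delta})/c$ with $a,c\in k_{m,n}$; either way it lies outside $k_{m,n}$ unless it is $0$ or $\infty$.

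Your approach is more structural: you invoke the Maclachlan--Reid fact that a non-elementary Fuchsian group with parabolics has invariant quaternion algebra $M_2(k)$, whence $G^{(2)}_{m,n}$ is conjugate into $PSL_2(k)$, and then a normalization plus a single explicit conjugation of $B^2$ and $C^2$ forces $\cos(\pi/m),\cos(\pi/n)\in k$.  This is cleaner conceptually and would apply with essentially no change to any Fuchsian group with parabolics and with $K\neq k$, whereas the paper's argument is tailored to the particular generators of $G_{m,n}$.  On the other hand, the paper's parity decomposition yields extra explicit information (it pins down exactly which $\sqrt{\delta}\cdot k_{m,n}^{*}$-coset the nonzero parabolic points lie in) and is entirely self-contained, avoiding the quaternion-algebra machinery.

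One small technical remark: in your normalization step you post-compose $g$ by an element sending two prescribed points of $k\cup\{\infty\}$ to $0$ and $\infty$.  Such an element always exists in $PGL_2(k)$ but not necessarily in $PSL_2(k)$ (the obvious M\"obius map may have non-square determinant in $k$).  This is harmless, since conjugation by any element of $PGL_2(k)$ still preserves $PSL_2(k)$, but you should say $PGL_2(k)$ rather than $PSL_2(k)$ there.
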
 
%----------------------------------------------------------------------------
\begin{proof}  Recall that the trace field $K_{m,n}$ of $G_{m,n}$ is generated over $\mathbb Q$ by the pair $\{\cos \pi/m, \cos \pi/n\}$ and Hooper shows that the triple $\{\cos 2\pi/m, \cos 2\pi/n,  \cos \pi/m \cos \pi/n\}$ generates
the invariant trace field, $k_{m,n}$, see also p. 159 of \cite{MR}.    Define $\delta = 2 (\cos 2\pi/n + 1)$; note that since   $n >2$ our $\delta$ is positive.  One easily shows the equality  $ K_{m,n} =  k_{m,n}(\sqrt{\delta})$ since first a standard double angle formula implies that $2 \cos  \pi/n = \sqrt{\delta}$,   and then of course $\cos \pi/m = (\cos \pi/m \cos \pi/n)/\cos \pi/n$.

Any pair of the elements in ~\eqref{e:generators} generate $G_{m,n}$, thus this group is generated by 
\[C = \begin{pmatrix} -\sqrt{\delta}& 1\\
                                      -1&0\end{pmatrix},\; A = \begin{pmatrix} 1&b \sqrt{\delta}\\
                                      0&1\end{pmatrix},\, 
\]
with 
\[b = \dfrac{\delta + 4 \cos \frac{\pi}{m} \cos \frac{\pi}{n}}{\delta}\,.\] 
Of course,  $b \in  k_{m,n}$ and thus $b \sqrt{\delta} \in   K_{m,n} \setminus k_{m,n}$.  We thus define two types of elements of $G_{m,n}$ --- an {\em even} element is of the form  
\[ \begin{pmatrix} a&b \sqrt{\delta}\\
                                      c \sqrt{\delta}&d\end{pmatrix}\,,\]
with $a,b,c,d \in     K_{m,n}$; similarly,  an {\em odd} element is of the form                                   
\[ \begin{pmatrix} a\sqrt{\delta}&b \\
                                      c &d\sqrt{\delta}\end{pmatrix}\,;
\]
in particular, $A$ is of even type while $C$ is of odd type.   
Multiplication thus is similar to addition of integers in that the product of two elements of the same type is an even element, whereas the product of two elements of distinct types is an odd element.   Now, any element of $G_{m,n}$ is a product in powers of $A$ and $C$,  and therefore is of one of the two types.   But,  the image of $\infty$ under such a group element is then of the form $a/c\sqrt{\delta}$ or $a\sqrt{\delta}/c$; any nonzero $a/c \in k_{m,n}$ of course has a multiplicative inverse in this field, and hence were the image of infinity to be in $k_{m,n}$, the contradiction of $\sqrt{\delta}$ also belonging to this field would follow.
\end{proof} 

The notion of even and odd subgroups of $G_{m,n}$ defined in the proof of Proposition~\ref{p:parabFxPtsNotInInvarTraceField}  generalizes a notion for the  Hecke triangle groups,  of signature $(2,q, \infty)$ with $q$ even; see say Rankin's {\em review} in the AMS Mathematical Reviews: MR0529968 (80j:10037).    

 %----------------------------------------------------------------------------
\begin{Lem} \label{l:evenGpOnly}    Suppose that  $m, n$ are such that the trace field of $G_{m,n}$ does not equal its invariant trace field.   If $H$ is a subgroup of $G_{m,n}$ that contains a parabolic element and $H$ is realized as a Veech group of some translation surface, then $H$ is contained in the even  subgroup of $G_{m,n}$. 
\end{Lem}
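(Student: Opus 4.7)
The plan is to combine the Kenyon--Smillie/Hooper realizability criterion --- that any group realized as a Veech group has its trace field equal to its invariant trace field --- with the even/odd decomposition of $G_{m,n}$ set up in the proof of Proposition~\ref{p:parabFxPtsNotInInvarTraceField}. The strategy is to show directly that no odd element of $G_{m,n}$ can lie in $H$.

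First I would observe that every square in $G_{m,n}$ is even: by the multiplication rules recalled there, both even$\,\cdot\,$even and odd$\,\cdot\,$odd produce an even element. Hence for any $M \in H$ the trace $\operatorname{tr}(M^{2})$ lies in $k_{m,n}$, so the invariant trace field of $H$ is contained in $k_{m,n}$. By the realizability criterion the full trace field of $H$ is then also contained in $k_{m,n}$; that is, every element of $H$ has trace in $k_{m,n}$.

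Next I would note that an odd element has trace in $k_{m,n}\sqrt{\delta}$, and by the hypothesis $k_{m,n}\ne K_{m,n}$ one has $\sqrt{\delta}\notin k_{m,n}$, so $k_{m,n}\cap k_{m,n}\sqrt{\delta} = \{0\}$. Thus any odd element of $H$ has trace zero, squares to $-I$, and as a M\"obius transformation is elliptic of order two, fixing no point of $\partial\mathbb{H}$. In particular the given parabolic $P\in H$, whose trace is $\pm 2\ne 0$, must itself be even.

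Finally I would rule out any odd $M\in H$: such an $M$ would make $PM\in H$ odd, hence $(PM)^{2}=-I$. Combined with $M^{2}=-I$ (so $M^{-1}=-M$), a brief rearrangement yields $M^{-1}PM=P^{-1}$. Then $M$ would have to fix the unique parabolic fixed point of $P$, contradicting the elliptic-order-two conclusion of the previous step. Therefore $H$ contains no odd element and so lies in the even subgroup of $G_{m,n}$. The main obstacle I expect is not computational but conceptual: one must resist imitating the parabolic-fixed-point argument of Proposition~\ref{p:parabFxPtsNotInInvarTraceField}, and instead use the realizability condition precisely so as to force every trace into $k_{m,n}$; the remainder is a short interplay between the algebraic shape of an odd element and the geometric rigidity of $P$.
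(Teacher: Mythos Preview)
Your proof is correct, and it takes a genuinely different route from the paper's.  Both arguments start from the same realizability input --- the trace field of a Veech group equals its invariant trace field --- and both observe that squares in $G_{m,n}$ are even, so the invariant trace field of $H$ sits inside $k_{m,n}$.  The divergence is in how one eliminates a hypothetical odd element of $H$.

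The paper argues by \emph{producing an odd hyperbolic element}.  It first notes (via Kenyon--Smillie) that no odd hyperbolic can lie in a Veech group, and that every parabolic of $G_{m,n}$ is even; so a putative odd element must be elliptic.  Given an odd elliptic $E$ and a parabolic $P=MA^{r}M^{-1}$, the paper conjugates and computes $\operatorname{tr}(P^{k}E)=\operatorname{tr}(A^{rk}F)=\operatorname{tr}F + rk\,(2\cos\pi/m+2\cos\pi/n)c$ with $c\ne 0$, so a suitable power makes this odd product hyperbolic --- a contradiction.

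Your argument instead draws the global consequence that \emph{every} trace of $H$ lies in $k_{m,n}$, so any odd element of $H$ has trace zero and hence squares to $-I$.  You then finish with a neat geometric step: if $M$ is odd in $H$ then $PM$ is also odd, so $(PM)^{2}=-I=M^{2}$ forces $M^{-1}PM=P^{-1}$, whence $M$ fixes the unique parabolic fixed point of $P$, impossible for an order-two elliptic.  This avoids any explicit trace computation and uses no conjugation to a normal form; the paper's approach, by contrast, is constructive and would let one actually exhibit the obstructing hyperbolic element.
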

%----------------------------------------------------------------------------
 
 \begin{proof} The Kenyon-Smillie result shows that if a hyperbolic element of $G_{m,n}$ lies in some Veech group, then this element is contained in the even subgroup.    Suppose now that some subgroup $H$ is {\em not} contained in the even subgroup of $G_{m,n}$.   Since parabolic elements of $G_{m,n}$  are conjugates of powers of $A$ they are even elements;   thus $H$ either contains a hyperbolic element and we are done, or it contains an odd elliptic element.   Denote this odd elliptic element by $E$.   Choose a parabolic element $P\in H$, so $P = M A^r M^{-1}$ for some $M \in G_{m,n}$ and some integer $r$.  Let $F = M^{-1} E M$, so that 
 \[PE = M A^r M^{-1}\, M F M^{-1} = M A^r  F M^{-1}\]
 has the trace of $A^rF$.  Represent  $F$ as  a $2\times 2$ real matrix with  $(2,1)$-entry $c$; this entry is non-zero as $F$ is elliptic.   Thus, the trace of $PE$ equals $\text{tr} F + r (2 \cos \pi/m + 2 \cos\pi/n) c$.   However,   we can replace $P \in H$ by any power of $P$ and hence ensure that the trace of this {\em odd} element of $H$ is greater than 2 in absolute value.  That is, we have found an odd hyperbolic element in $H$,  obstructing this subgroup from being the Veech group of any translation surface. 
 \end{proof}

\section{Non-parabolic directions in the periodic field}  We show that 
certain infinite families of $G_{m,n}$ are such that any translation surface with Veech group commensurable to  $G_{m,n}$ has infinite classes of non-parabolic directions in its periodic field.   A search through these classes can reveal special affine pseudo-Anosov automorphisms,  as we found for the $(m,n) = (7,7)$ case; see Example~\ref{eg:sevenSeven}.   (This type of informed search was used in \cite{AS}.)

The technique we employ is number theoretic.  Since all entries of the generators given in~\eqref{e:generators}  are algebraic integers it easily follows that   $G_{m,n}$ is a  subgroup of $\text{SL}_2(\mathcal O_K)$,  where $\mathcal O_K$ is the ring of integers of the field $K= K_{m,n}$.   The quotient of $\mathcal O_K$ by any of its prime ideals is a finite field $\mathbb F$,   and there is induced homomorphism  from $G_{m,n}$ to a subgroup of $\text{SL}_2(\mathbb F)$, as well as from  the projectivisation of $G_{m,n}$ to $\text{PSL}_2(\mathbb F)$; these  {\em reduction homomorphisms} are defined by entry-wise reduction of the matrices in our group,  that is each entry is sent to its equivalence class modulo the ideal.  Now, $\text{SL}_2(\mathbb F)$ acts on the finite projective line $\mathbb P^1(\mathbb F)$,  and if the image of $G_{m,n}$ fails to act transitively there, then $G_{m,n}$ itself must fail to act transitively on $\mathbb P^1(K)$.    In this case,  the pre-images of elements not in the orbit of infinity are then  elements of $K$ that are not parabolic fixed points.     

 This method goes back to  Borho  \cite{B} and Borho-Rosenberger \cite{BR}, in the setting of the Hecke triangle groups, where it was further pursued by a school about Leutbecher,  see \cite{HMTY} for a recent usage in that setting. 
Underlying the method is the classification of the subgroups of the various $\text{PSL}_2(\mathbb F)$ by Dickson\cite{D} and Macbeath's \cite{M} application of this to study finite triangle groups.    We call the kernel of the reduction homomorphism a {\em congruence subgroup} of $G_{m,n}$.         Congruence subgroups of the Hecke triangle groups have been been studied for various reasons, see for example \cite{P}, \cite{LLT}.    The action of the full Galois group $\text{Gal}(\overline{ \mathbb Q}/\mathbb Q)$ on the algebraic curves uniformized by those congruence subgroups of Hecke groups corresponding to surjective reduction maps is studied in \cite{SS}. Indeed, as Macbeath showed,   the reduction of a triangle group (with parabolics) is almost always the full finite matrix group, see the recent work of Clark and Voight \cite{CV} for further discussion.  

\begin{Rmk}  There is a more elementary manner to prove the existence of non-parabolic points in special cases.    Indeed,  the trace field $K=K_{m,n}$ of $G_{m,n}$ is totally real,  thus in the setting where  $G_{m,n}$ is a  subgroup of $\text{SL}_2(\mathcal O_K)$,   the fact that the cusps of the Hilbert modular group  $\text{SL}_2(\mathcal O_K)$ are in 1-to-1 correspondence with the elements of the class group of $K$ shows that whenever  the class number of $K$ is greater than one there must be elements of $K$ that are non-parabolic fixed points.  See \cite{AS} for further discussion. 
\end{Rmk}

When considering quotients by prime ideals of rings of integers of fields generated by cosine values, the following lemma of Leutbecher is of great utility. 

%----------------------------------------------------------------------------
\begin{Lem}\label{l:Leutbecher}[Leutbecher \cite{L}]  Given an integer $m\ge 3$,  let $\lambda = \lambda_m = 2 \cos \pi/m$.    If $m$ is not of the form twice a power of a prime, then $\lambda$ is a unit in the ring of integers $\mathcal O_K$ of $K = \mathbb Q(\lambda)$.   Otherwise,  if $m=2p^k$ for some prime $p$, $\lambda^{\phi(m)}$ is an associate in this ring of $p$;  here as usual $\phi(\cdot)$ denotes Euler's totient function. 
\end{Lem}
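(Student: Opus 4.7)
The plan is to embed $\lambda$ into the cyclotomic field $\mathbb{Q}(\zeta_{2m})$, thereby reducing the problem to the classical evaluation of the $m$-th cyclotomic polynomial $\Phi_m$ at $-1$. Setting $\zeta = e^{i\pi/m}$ (a primitive $2m$-th root of unity) and $\omega = \zeta^2$ (a primitive $m$-th root of unity), one has $\lambda = \zeta + \zeta^{-1} = \zeta^{-1}(1+\omega)$. Since $\zeta$ is a root of unity and hence a unit in $\mathbb{Z}[\zeta]$, the elements $\lambda$ and $1+\omega$ will be associates in $\mathbb{Z}[\zeta]$, so the analysis reduces to understanding the prime-ideal structure of $1+\omega$. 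Recall also that $K = \mathbb{Q}(\lambda)$ is the maximal totally real subfield of $\mathbb{Q}(\zeta)$, of degree $\phi(2m)/2$ over $\mathbb{Q}$.

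To establish the first assertion, I would compute the $K$-norm of $\lambda$ via $N_{K/\mathbb{Q}}(\lambda)^2 = N_{\mathbb{Q}(\zeta)/\mathbb{Q}}(\lambda)$. Factoring each Galois conjugate as $\zeta^a + \zeta^{-a} = \zeta^{-a}(1+\omega^a)$, the product of the $\zeta^{-a}$-factors collapses to $1$ (since $\sum_{a \in (\mathbb{Z}/2m\mathbb{Z})^*} a$ is a multiple of $2m$ whenever $\phi(2m)$ is even), and the remaining product $\prod_a (1 + \omega^{a \bmod m})$ equals $\Phi_m(-1)$ when $m$ is odd (the reduction $(\mathbb{Z}/2m\mathbb{Z})^* \to (\mathbb{Z}/m\mathbb{Z})^*$ being then a bijection) and $\Phi_m(-1)^2$ when $m$ is even (the reduction being then $2$-to-$1$). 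In either case $N_{K/\mathbb{Q}}(\lambda) = \pm\,\Phi_m(-1)$. The first assertion then reduces to the classical identity
\[
\Phi_m(-1) \;=\; \begin{cases} p & \text{if } m = 2p^k \text{ for some prime } p \text{ and } k \geq 1, \\ 1 & \text{otherwise (for } m \geq 3), \end{cases}
\]
which follows from $\Phi_{2n}(x) = \Phi_n(-x)$ for odd $n > 1$ together with $\Phi_n(1) = p$ iff $n = p^k$ is a prime power.

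For the second assertion, I would assume $m = 2p^k$ and let $\eta$ denote a primitive $p^k$-th root of unity. If $p$ is odd, then $-\eta$ has order $m$, so $\omega = -\eta^a$ for some $a$ coprime to $p$, and $1+\omega = 1 - \eta^a$. If $p = 2$ (so $m = 2^k$ with $k \geq 2$), the element $-\omega$ is itself a primitive $2^k$-th root of unity and the analogous reduction applies. In the prime-power cyclotomic ring $\mathbb{Z}[\eta]$, all elements $1 - \eta^a$ with $\gcd(a,p) = 1$ are associates of the uniformizer $\pi := 1 - \eta$, and $\pi^{\phi(p^k)}$ is an associate of $p$ by the standard Eisenstein-type ramification of $p$ in $\mathbb{Q}(\zeta_{p^k})$. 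Since $\phi(m) = \phi(p^k)$ in both subcases, the ideals $(1+\omega)^{\phi(m)}$ and $(p)$ coincide in $\mathbb{Z}[\omega] \subset \mathbb{Z}[\zeta]$, so $\lambda^{\phi(m)} = u\cdot p$ for some unit $u \in \mathbb{Z}[\zeta]$. The ratio $u = \lambda^{\phi(m)}/p$ lies in $K$ and is an algebraic integer, so $u \in \mathcal{O}_K$, and $|N_{K/\mathbb{Q}}(u)| = 1$ forces $u$ to be a unit of $\mathcal{O}_K$; thus $\lambda^{\phi(m)}$ is an associate of $p$ in $\mathcal{O}_K$, as claimed.

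The main obstacle will be the bookkeeping across the three rings $\mathbb{Z}[\omega]$, $\mathbb{Z}[\zeta]$, and $\mathcal{O}_K$, together with the explicit classical evaluation of $\Phi_m(-1)$; once the factorization $\lambda = \zeta^{-1}(1+\omega)$ is exploited, both steps become essentially routine.
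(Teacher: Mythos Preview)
The paper does not actually prove this lemma: it is quoted verbatim from Leutbecher \cite{L} and used as a black box, with no argument supplied. So there is no ``paper's own proof'' to compare against. Your proposal, by contrast, supplies a self-contained argument via cyclotomic fields, and the overall strategy---writing $\lambda = \zeta^{-1}(1+\omega)$ and reducing to the evaluation of $\Phi_m(-1)$ and to the ramification of $p$ in $\mathbb{Q}(\zeta_{p^k})$---is sound and essentially how one would reconstruct Leutbecher's result from first principles.

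Two small points deserve tightening. First, your derivation of the value of $\Phi_m(-1)$ invokes only $\Phi_{2n}(x)=\Phi_n(-x)$ for odd $n>1$ and the prime-power evaluation of $\Phi_n(1)$; this covers $m$ odd and $m=2\cdot(\text{odd})$, but not $4\mid m$. For $4\mid m$ one should add the observation that $\omega\mapsto -\omega$ permutes the primitive $m$-th roots, whence $\Phi_m(-1)=\Phi_m(1)$, which is $2$ if $m=2^k$ and $1$ otherwise. Second, the sentence ``since $\phi(m)=\phi(p^k)$ in both subcases'' is mildly misleading in the $p=2$ subcase: there you have rewritten $m=2^k$ and are using directly that $(1-(-\omega))^{\phi(m)}$ is an associate of $2$ in $\mathbb{Z}[\omega]$ via total ramification of $2$ in $\mathbb{Q}(\zeta_{2^k})$; no comparison with a smaller $\phi(p^{k'})$ is needed. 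With these clarifications, your argument goes through.
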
 
%----------------------------------------------------------------------------
       
Similarly,  we need the following.

%----------------------------------------------------------------------------------
\begin{Lem}\label{l:oddStandForm}  If at least one of  $m, n$ is odd, then $G_{m,n}$ is in parabolic standard form. 
 \end{Lem}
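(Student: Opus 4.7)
The plan is to reduce to the single task of showing that $1$ is a parabolic fixed point. Indeed, by the first paragraph of the proof of Lemma~\ref{l:specHypMatrix}, $\infty$ is fixed by the parabolic $A$ and $0$ is the image of $\infty$ under $B^{-1}$, so both $0$ and $\infty$ are parabolic fixed points of $G_{m,n}$ irrespective of parity. It therefore suffices to exhibit, under the hypothesis that at least one of $m,n$ is odd, an element of $G_{m,n}$ carrying $\infty$ to $1$.

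The main computational tool will be the closed form~\eqref{e:powersOfBandC}, from which $B^k\cdot\infty=-\sin((k+1)\pi/m)/\sin(k\pi/m)$ and $C^k\cdot\infty=\sin((k+1)\pi/n)/\sin(k\pi/n)$. In the first sub-case, $n$ odd, I would set $k=(n-1)/2$ and apply the complementary-angle identity $\sin(\pi/2\pm\pi/(2n))=\cos(\pi/(2n))$ to obtain $C^{(n-1)/2}\cdot\infty=1$ directly; this immediately places $1$ in the orbit of $\infty$.

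The remaining sub-case is $m$ odd and $n$ even. The parallel computation with $B^{(m-1)/2}$ yields $-1$ rather than $1$, because of the extra minus sign in the expression for $B^k\cdot\infty$. To bridge from $-1$ to $1$, I would use the order-two elliptic $C^{n/2}$ already invoked in the proof of Lemma~\ref{l:specHypMatrix}: a short calculation shows its M\"obius action is $z\mapsto(\cos(\pi/n)\,z-1)/(z-\cos(\pi/n))$, and substituting $z=-1$ gives $1$. Composing, $C^{n/2}B^{(m-1)/2}\cdot\infty=1$, so again $1$ lies in the orbit of $\infty$.

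The only real obstacle is the sign asymmetry between the definitions of $B$ and $C$ (top-left entry $+2\cos\pi/m$ versus $-2\cos\pi/n$), which is what forces the mixed-parity case to need the extra composition with the order-two elliptic; the telescoping trigonometric identities themselves are routine. Once these two orbit computations are in hand, the lemma follows.
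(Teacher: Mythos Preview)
Your argument is correct and follows essentially the same route as the paper's: both use the closed form~\eqref{e:powersOfBandC} together with the identity $\sin\frac{(k+1)\pi}{2k+1}=\sin\frac{k\pi}{2k+1}$ to place $1$ (or $-1$) in the orbit of a known parabolic fixed point. The paper records this as $B^\ell\cdot(-1)=0$ and $C^j\cdot(1)=0$, while you compute the equivalent $B^{(m-1)/2}\cdot\infty=-1$ and $C^{(n-1)/2}\cdot\infty=1$. Your handling of the sub-case $m$ odd, $n$ even is in fact more explicit than the paper's: you insert the composition with the order-two element $C^{n/2}$ to pass from $-1$ to $1$, whereas the paper's concluding sentence leaves this step tacit.
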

%-----------------------------------------------------------------------------------

\begin{proof} Recall that both $0$ and $\infty$ are parabolic fixed points of any $G_{m,n}$.   
From \eqref{e:powersOfBandC}, 
we find that if   $m = 2 \ell +1$ or $n = 2 j + 1$ is odd, then  
\[ B^{\ell}\cdot (-1) = 0, \;\; \; C^{j}\cdot (1) = 0,\]
respectively.   This as,  $\sin \frac{(\ell +1) \pi}{2 \ell + 1} = \sin  \frac{\ell  \pi}{2 \ell + 1} $, and similarly in the other case.     Thus,  if at least one of $m,n$ is odd, then all three of $0, 1, \infty$ are parabolic fixed points of our group.  
\end{proof}

\subsection{Nonparabolic directions: $m=2^d$ case when odd $n \neq 2^f + 1$}\label{ss:intransM4}
 %----------------------------------------------------------------------------
\begin{Prop}\label{p:nonParabolics}    Suppose that  $m =  2^d$ with $d>1$ and that $n$ is odd with $n \neq 2^f + 1$ for any $f$.   Then $G_{m,n}$ is in parabolic standard form and is integrally normalized;  furthermore,  any finite index subgroup of $G_{m,n}$ that is realized as a Veech group is such that the corresponding translation surface has  non-parabolic directions with vanishing Sah-Arnoux-Fathi invariant. 
\end{Prop}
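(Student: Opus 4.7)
The plan is, after verifying the standard-form and integrality assertions, to use reduction modulo a prime of $\mathcal{O}_K$ above $2$ to force the image of $G_{m,n}$ in a finite matrix group to act non-transitively on the finite projective line; any non-orbit point there lifts to a non-parabolic slope in $K := K_{m,n}$ lying in the periodic field of the translation surface.

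\emph{Set-up.} The entries of the generators in~\eqref{e:generators} are algebraic integers, so $G_{m,n} \subset \mathrm{SL}_2(\mathcal{O}_K)$ (i.e.\ integrally normalized), and by Lemma~\ref{l:oddStandForm} the group is in parabolic standard form. For any finite-index $H \le G_{m,n}$ realized as the Veech group of a translation surface $\mathcal{S}$, each parabolic of $G_{m,n}$ has a positive power in $H$, so $H$ has exactly the same parabolic fixed points as $G_{m,n}$; thus $\mathcal{S}$ is in Calta--Smillie standard form and its SAF-vanishing slopes form $K \cup \{\infty\}$. Hence it suffices to exhibit $\alpha \in K$ outside the $G_{m,n}$-orbit of $\infty$.

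\emph{Reduction modulo $\mathfrak{p} \mid 2$.} Fix a prime $\mathfrak{p} \subset \mathcal{O}_K$ above $2$ and set $\mathbb{F} = \mathcal{O}_K/\mathfrak{p}$. Lemma~\ref{l:Leutbecher} applied to $m = 2^d$ with $d>1$ gives that $\lambda_m^{2^{d-1}}$ is an associate of $2$, so $\lambda_m \in \mathfrak{p}$; applied to odd $n$, it makes $\lambda_n$ a unit in $\mathcal{O}_K$. Working in characteristic $2$, the generators~\eqref{e:generators} reduce to
\[
\overline{A} = \begin{pmatrix} 1 & \overline{\lambda_n} \\ 0 & 1 \end{pmatrix},\quad \overline{B} = \begin{pmatrix} 0 & 1 \\ 1 & 0 \end{pmatrix},\quad \overline{C} = \overline{A}\,\overline{B},
\]
with $\overline{A}$ and $\overline{B}$ involutions. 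The eigenvalues of $C$ over $\mathbb{C}$ are primitive $n$-th roots of unity (as $C^n = I$ in $\mathrm{SL}_2(\mathbb{R})$ for odd $n$), and since $n$ is odd their reductions to $\overline{\mathbb{F}}_2$ retain order $n$. Hence $\overline{C}$ has order exactly $n$, and the image $\pi(G_{m,n}) \subset \mathrm{PSL}_2(\mathbb{F})$ is the dihedral group $D_n$.

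\emph{Non-transitivity.} Because $\mathbb{Q}(\lambda_m)/\mathbb{Q}$ is totally ramified at $2$ while $\mathbb{Q}(\lambda_n)/\mathbb{Q}$ is unramified there, $\mathbb{F} \cong \mathbb{F}_{2^r}$, where $r$ is the order of $2$ in $(\mathbb{Z}/n)^\times/\{\pm 1\}$; so $|\mathbb{P}^1(\mathbb{F})| = 2^r + 1$. For $D_n$ (of order $2n$) to act transitively on this set, one needs $2^r + 1 \mid 2n$ and $2^r + 1 \leq 2n$. By the definition of $r$ one has $n \mid 2^r \mp 1$, and a short case analysis---using parity when $2^r \equiv 1 \pmod n$ and direct size comparison when $2^r \equiv -1 \pmod n$---forces $n = 2^r + 1$, contradicting the hypothesis $n \neq 2^f + 1$. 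This final step, pinning the transitivity obstruction exactly to the Fermat-form exclusion, is the main obstacle. Once it is in hand, any $\overline{\alpha} \in \mathbb{F}$ outside the orbit of $\overline{\infty}$ lifts to the required $\alpha \in K$: if some $\gamma \in G_{m,n} \subset \mathrm{SL}_2(\mathcal{O}_K)$ satisfied $\gamma \cdot \infty = \alpha$, reducing would give $\overline{\gamma}\cdot\overline{\infty} = \overline{\alpha}$, contradicting non-transitivity.
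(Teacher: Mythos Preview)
Your proof is correct and follows essentially the same strategy as the paper: reduce modulo a prime $\mathfrak p$ above $2$, identify the image of $G_{m,n}$ as the dihedral group $D_n$, and use non-transitivity on $\mathbb P^1(\mathbb F_{2^r})$ to produce a point of $K$ outside the parabolic orbit. Where the paper simply cites Borho--Rosenberger for the non-transitivity step and leaves the residue degree implicit, you supply the explicit reduced generators, the formula $r = \mathrm{ord}\bigl(2 \bmod (\mathbb Z/n)^\times/\{\pm1\}\bigr)$, and the divisibility case-split forcing $n = 2^r+1$; this makes your argument more self-contained. One small point you pass over: the assertion that the SAF-vanishing slopes of $\mathcal S$ are exactly $K\cup\{\infty\}$ requires that the trace field of the finite-index subgroup $H$ is all of $K$, not a proper subfield; the paper secures this by observing that for $m=2^d$, $n$ odd one has $k_{m,n}=K_{m,n}$, so the commensurability-invariant trace field already equals $K$.
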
 
%----------------------------------------------------------------------------

\begin{proof}  By Lemma ~\ref{l:oddStandForm} $G_{m,n}$ is in parabolic standard form.  
By Lemma~\ref{l:Leutbecher}  with $m= 2^d$,  we find that the rational integral ideal $(2)$ factors as $(2 \cos \pi/m)^{\phi(m)}$.     
Now,  with  $K = K_{m,n}$  choose any prime ideal of  $\mathcal O_K$ lying above $(2 \cos \pi/m)$,  say $\mathfrak p$.    We have $\mathcal O_K/\mathfrak p \cong \mathbb F_{2^f}$, where $f$ is the residue degree of $\mathfrak p$.     This induces  a group homomorphism $\text{PSL}_2 (\mathcal O_K) \to \text{PSL}_2 ( \mathbb F_{2^f})$ that sends $B$ to an element of order two,  and hence the image of our group is a dihedral group of order $2n$.    Arguing as in \cite{BR},  this dihedral group is transitive on $\mathbb P^1(\mathbb F_{2^f})$ only if  $n = 2^f + 1$.     (Since $(2)$ is totally  ramified to $\mathbb Q(2 \cos \pi/m)$, the  residue degree of $\mathfrak p$ is the residue degree of the ideal of $\mathbb Q(2 \cos \pi/n)$ that $\mathfrak p$ lies above.)   

Thus, when $n$ is not of the form $2^f +1$   the orbit of infinity under $G_{m,n}$ does not equal all of   $\mathbb P^1(K)$.  That is,  there are elements of $\mathbb P^1(K)$ that are not parabolic fixed points. Since  $K =K_{m,n}$ also equals the invariant trace field $k_{m,n}$ by p. 159 \cite{MR},  and as verified in detail by Hooper, one has the  $K$ is the trace field of any finite index subgroup of $G_{m,n}$.     But,  the union of the parabolic fixed points of any such subgroup is simply the set of parabolic fixed points of $G_{m,n}$.   This is hence a proper subset of the trace field of the subgroup.    Since $G_{m,n}$ is in parabolic standard form,  so is any finite index subgroup; thus,  by Calta-Smillie, $\mathbb P^1(K)$ is the set of directions with vanishing Sah-Arnoux-Fathi invariant for the corresponding surface.
\end{proof}

\begin{Eg}  Let $m=4$ and $n=7$.  Recall that $\mathbb Z[2 \cos \pi/7]$ is the full ring of integers of $\mathbb Q(2 \cos \pi/7)$.   The minimal polynomial of $2 \cos \pi/7$ over $\mathbb Q$ (and hence over $\mathbb Z$, as this is an algebraic integer) is $p(x) = x^3 - x^2 - 2 x + 1$.    The reduction of  $p(x)$  modulo two  is irreducible; from this,  the ideal $(2)$ is inert to $\mathbb Q(2 \cos \pi/7)$,  and the quotient field $\mathbb Z[2 \cos \pi/7]$ modulo this ideal is thus a finite field of order $2^3$. 

 Indeed,  the orbit of $\infty$  modulo 2  is (by calculations, based on the fact that the orbit of $0$ is given by its orbit under just  the reduction of $B$,  and using the arithmetic of $\mathbb Q(2 \cos \pi/7)$ to simplify expressions)
\[ 0, \infty, \lambda, \lambda^2, 1, \lambda + 1, \lambda^2+\lambda\,\]
where $\lambda = 2 \cos \pi/7$.   Thus,  any element of $K$ in the $G_{4,7}$ orbit of any element of $\mathcal O_K$ equivalent to $\lambda^2+ \lambda + 1$ is {\em not} a parabolic fixed point.
\end{Eg}
\bigskip

%\noindent
%{\bf Remark added 3/3/11}    We do not get a dihedral group when we take $m = 2 p^{\nu}$ with odd $p$,  since the reduction of $BCB^{-1}$ is then {\em not} equal to $C^{-1}$.  (due to a sign difference in an off-diagonal entry). 
 
 \subsection{Nonparabolic directions: $m=n$ case when odd $m$ not divisible by any  $2^f + 1$; another special pseudo-Anosov map}\label{ss:intransDiag}
 %----------------------------------------------------------------------------
\begin{Prop}\label{p:nonParabolicsDiag}    Suppose that  $m =  n$  is odd and not divisible by any integer $2^f + 1$ for positive $f$.   Any finite index subgroup of $G_{m,n}$ that is realized as a Veech group is such that the corresponding translation surface has  non-parabolic directions with vanishing Sah-Arnoux-Fathi invariant. 
\end{Prop}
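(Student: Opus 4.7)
The proof follows the template of Proposition~\ref{p:nonParabolics}: reduce $G_{m,n}$ modulo a prime ideal of $\mathcal O_K$ above $(2)$ and exploit a failure of transitivity on the finite projective line. What is special about the diagonal case $m=n$ is a striking cancellation modulo $2$, which forces the reduced image to be cyclic rather than dihedral.

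First I handle the fields. Since $m=n$ is odd, Lemma~\ref{l:oddStandForm} places $G_{m,n}$ in parabolic standard form, and Lemma~\ref{l:Leutbecher} shows that $\lambda := 2\cos\pi/m$ is a unit of $\mathcal O_K$, where $K = K_{m,n} = \mathbb Q(\cos\pi/m)$. Writing $m = 2k+1$, the identity $\cos\pi/m = -\cos\bigl(k \cdot 2\pi/m\bigr)$ together with Chebyshev gives $K = \mathbb Q(\cos 2\pi/m)$, so the trace field of $G_{m,n}$ equals its invariant trace field. By Hooper's observation cited in the excerpt, the trace field of any finite index subgroup of $G_{m,n}$ is then also $K$.

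Next I run the reduction. Fix any prime $\mathfrak p \subset \mathcal O_K$ above $(2)$, of residue degree $f$, so $\mathcal O_K / \mathfrak p \cong \mathbb F_{2^f}$, and let $\rho : G_{m,n} \to \text{PSL}_2(\mathbb F_{2^f})$ be the resulting reduction map. Because $m=n$, the upper right entry of $A$ in~\eqref{e:generators} is $4\cos\pi/m$, which lies in $2\mathcal O_K \subset \mathfrak p$; hence $\rho(A) = I$. Since $C=AB$, the image $\rho(G_{m,n})$ is the cyclic group generated by $\rho(B)$. That $\lambda$ is a unit mod $\mathfrak p$ ensures $\rho(B)$ has nonzero trace, so $\rho(B) \neq I$, and since $B$ has order $m$ in $\text{PSL}_2(\mathbb R)$, the order of $\rho(B)$ divides $m$. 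Therefore every orbit of $\langle\rho(B)\rangle$ on $\mathbb P^1(\mathbb F_{2^f})$ has length dividing $m$. As $|\mathbb P^1(\mathbb F_{2^f})| = 2^f+1$ and, by hypothesis, $m$ has no divisor of the form $2^f+1$, no such orbit can be all of $\mathbb P^1(\mathbb F_{2^f})$. So the image is intransitive, and consequently $G_{m,n}$ fails to act transitively on $\mathbb P^1(K)$: there are elements of $K$ that are not parabolic fixed points.

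Finally, passage to a finite index subgroup preserves the set of parabolic fixed points, so these elements remain non-parabolic for any finite index subgroup $H \le G_{m,n}$. If $H$ is the Veech group of a translation surface $\mathcal S$, then $H$ is in parabolic standard form, $\mathcal S$ is in Calta-Smillie standard form, and by the first paragraph the trace field of $H$ is $K$; hence, by the Calta-Smillie result recalled just before Lemma~\ref{l:special}, $\mathbb P^1(K)$ is precisely the set of directions on $\mathcal S$ with vanishing SAF invariant. Our non-parabolic points of $K$ therefore provide non-parabolic directions with vanishing SAF invariant, as claimed. The main obstacle is recognizing the cancellation $\rho(A) = I$ that distinguishes the $m=n$ case from the setup of Proposition~\ref{p:nonParabolics} (where the image was instead dihedral); once this is in hand, the orbit-size bound and the invocation of Calta-Smillie are routine.
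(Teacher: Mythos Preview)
Your proof is correct and follows essentially the same approach as the paper: observe that $A\equiv I \pmod{2\mathcal O_K}$ when $m=n$, reduce modulo a prime $\mathfrak p$ above $(2)$ to get a cyclic image of order dividing $m$, and conclude intransitivity on $\mathbb P^1(\mathbb F_{2^f})$ from the hypothesis that no $2^f+1$ divides $m$. Your write-up is in fact a bit more explicit than the paper's, since you spell out why $K_{m,m}$ equals its invariant trace field (via $\cos\pi/m = -\cos\bigl(\tfrac{m-1}{2}\cdot \tfrac{2\pi}{m}\bigr)$) and hence why the trace field is stable under passage to finite index subgroups; the paper leaves this implicit here, having made the analogous remark in the proof of Proposition~\ref{p:nonParabolics}.
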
 
%----------------------------------------------------------------------------

\begin{proof}  Again,  $G_{m,n}$ is in parabolic standard form.  The matrix $A$ is now clearly congruent to the identity modulo $2 \mathcal O_K$.   We thus choose a prime $\mathcal O_K$ ideal $\mathfrak p$ dividing this ideal.   By Lemma~\ref{l:Leutbecher}, neither $B$ nor $C$ is trivial when entries are reduced modulo $\mathfrak p$.    Thus,  $G_{m,m}$ projects to a non-trivial cyclic subgroup of $\text{SL}(\mathbb F_{2^f})$ where $f$ is the residue degree of $\mathfrak p$.   The order of this homorphic image must divide the orders of $B$ and $C$,  that is must divide $m$.   Since  $\mathbb P^1(\mathbb F_{2^f})$ has $2^f+1$ elements,   we conclude that this homomorphic image is too small to act transitively.     But then $G_{m,m}$ fails to act transitively on $\mathbb P^1(K)$.
\end{proof}  

\bigskip 
\begin{Eg}\label{eg:sevenSeven}  One again finds that the class of 
$\lambda^2+ \lambda + 1$ is not in the orbit of infinity.  In fact,  this element itself is fixed by 
a hyperbolic element for $G_{7,7}$;  for simplicity,  we take a conjugate element to get a simpler appearing matrix.   Let 
\[ M = A C^5 = \begin{pmatrix}-1 - 2 \lambda^2& -2 + 3 \lambda + 2 \lambda^2\\ -\lambda& -1 + \lambda^2\end{pmatrix}\]
(we have of course reduced entries modulo $p(x)$, the minimal polynomial of $\lambda$),  
so $M$ fixes 
\[ \dfrac{3 \lambda + \sqrt{ 7 \lambda^2 + \lambda -1}}{2 \lambda}\,.\]
 One calculates that $\beta =  (\alpha+13)/(\alpha-16)$ is a square root of $\alpha = 7 \lambda^2 + \lambda -1$.   Thus $M$ does have fixed points in the trace field.   There is correspondingly a special affine pseudo-Anosov homeomorphism of  totally real cubic dilatation $-9 \lambda^2 + 10 \lambda + 16$ on Hooper's $(Y_{7,7}, \eta_{7,7})$.   
 
%---------------------------------Figure 2----------------------------------------------
\begin{figure}[h]%
\centering
% \begin{flushleft}
\parbox{1in}{
\scalebox{.6}{\includegraphics{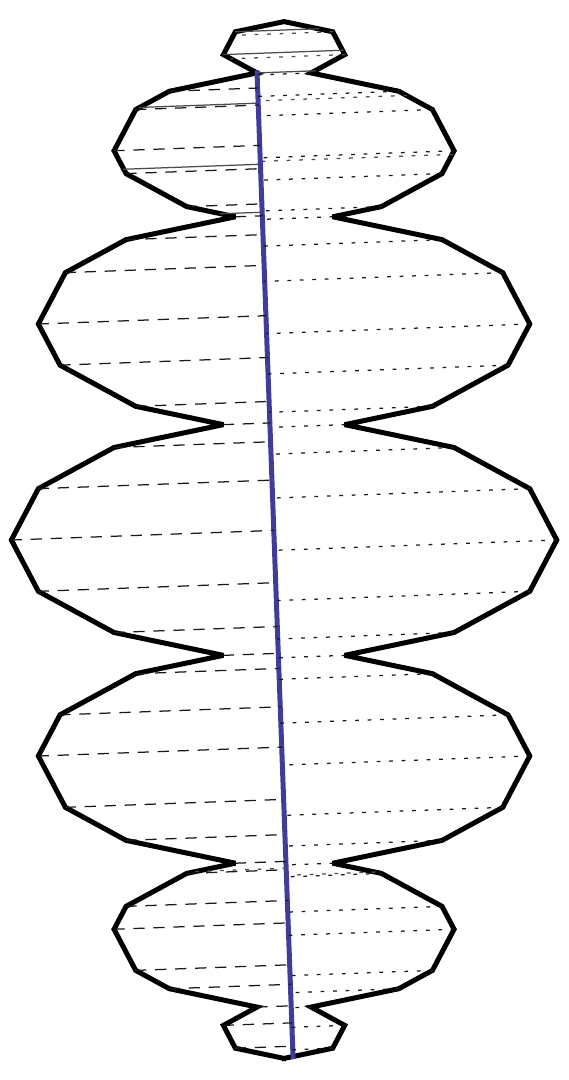}}
}%
%\end{flushleft}
\qquad \qquad \qquad 
%\begin{flushright}
\begin{minipage}{1in}%
\scalebox{.7}{\includegraphics{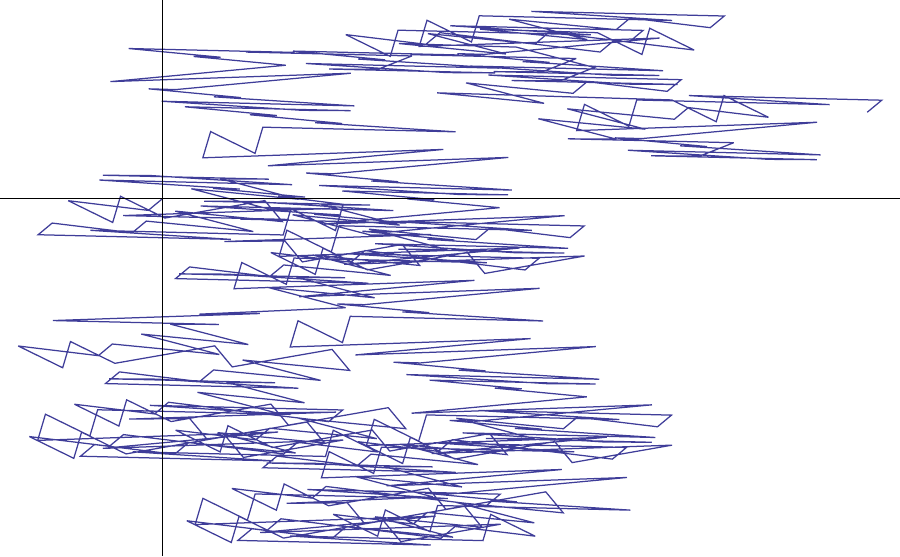}}
\end{minipage}%
%\end{flushright}
\caption{A pseudo-Anosov map with vanishing Sah-Arnoux-Fathi invariant, indicated as zippered rectangles on the normalization of Hooper's translation surface $(Y_{7,7}, \eta_{7,7})$ such that all vertices have coordinates in the periodic field.  A deterministic walk approximating  \`a la McMullen \cite{Mc} a zero flux leaf of this rank three flow, as represented by using Galois automorphisms.}%
\label{f:fatSeven7andFluxMapped}%
\end{figure}
%-------------------------------------------------------------------------------------------
 
 The periodic field here is totally real and cubic (over the rationals), but
as $(Y_{7,7}, \eta_{7,7})$ is of genus 15,  this is certainly not the example of a  totally real cubic case of pseudo-Anosov map found by Lanneau and discussed by  McMullen in \cite{Mc}.    We now pursue McMullen's idea of focussing on the {\em rank} of the flow.   To normalize the surface so that all of the coordinates lie in the periodic field,   we divide all $x$-coordinates by $\sin \pi/7$ (including adjusting the flow direction, of course), see the left hand side of Figure~\ref{f:fatSeven7andFluxMapped}.      We choose a transversal (again in a direction perpendicular to that of our pseudo-Anosov with vanishing Sah-Arnoux-Fathi invariant) and explicitly find the interval exchange transformation given by first return to this transversal;  both the set of widths and of translations for this transformation are contained in the periodic field.   We can now consider an initial piece of the orbit under the interval exchange map of any point $x$ on the interval,   $(x_n)_{n\le N}$ and map this to $(\,(x_n-x)', (x_n-x)''\,)_{n\le N}$, the ordered pairs of the conjugates of the difference of the $n^{\text{th}}$ image from $x$, see the right hand side of Figure~\ref{f:fatSeven7andFluxMapped}. (There, after scaling the transversal interval to have length one,  we have taken $x = \lambda^{3}/8$.)   As MuMullen discusses,  this deterministic walk approximates the continuous leaf when identifying the first homology of the period torus of the real part of (the normalization of) $\eta_{7,7}$  with  the periodic field itself.
 
\end{Eg}

\end{document}